\numberwithin{equation}{section}
\DeclareMathOperator{\tr}{tr}
\DeclareMathOperator{\Cov}{Cov}
\def\E{\mathbb{E}}
\def\d{\,\mathrm{d}}
\def\R{\mathbb{R}}
\def\be{\mathbf{e}}
\def\bq{\mathbf{q}}
\def\bu{\mathbf{u}}
\def\bv{\mathbf{v}}
\def\bw{\mathbf{w}}
\def\bx{\mathbf{x}}
\def\bzer{\mathbf{0}}
\def\bA{\mathbf{A}}
\def\bB{\mathbf{B}}
\def\bC{\mathbf{C}}
\def\bD{\mathbf{D}}
\def\bI{\mathbf{I}}
\def\bP{\mathbf{P}}
\def\bQ{\mathbf{Q}}
\def\bR{\mathbf{R}}
\def\bU{\mathbf{U}}
\def\bX{\mathbf{X}}
\def\bY{\mathbf{Y}}
\def\bSig{\mathbf{\Sigma}}
\def\bOm{\mathbf{\Omega}}
\def\balpha{\boldsymbol{\alpha}}
\def\bxi{\boldsymbol{\xi}}
\def\bzeta{\boldsymbol{\zeta}}
\def\bPhi{\boldsymbol{\Phi}}
\def\dd{\mathrm{d}}
\def\e{\mathrm{e}}
\title{
Dynamical properties of coarse-grained linear SDEs
}
\author{
{Thomas Hudson\footnote{Mathematics Institute, University of Warwick,  T.Hudson.1@warwick.ac.uk}} \and {Xingjie Helen Li\footnote{Department of Mathematics and Statistics, University of North Carolina at Charlotte,  xli47@uncc.edu}}
}
\date{\today}
\begin{document}
\maketitle

\begin{abstract}
    Coarse-graining or model reduction is a term describing a range of approaches used to extend the time-scale of molecular simulations by reducing the number of degrees of freedom. In the context of molecular simulation, standard coarse-graining approaches approximate the potential of mean force and use this to drive an effective Markovian model. To gain insight into this process, the simple case of a quadratic energy is studied in an overdamped setting. A hierarchy of reduced models is derived and analysed, and the merits of these different coarse-graining approaches are discussed. In particular, while standard recipes for model reduction accurately capture static equilibrium statistics, it is shown that dynamical statistics such as the mean-squared displacement display systematic error, even when a system exhibits large time-scale separation. In the linear setting studied, it is demonstrated both analytically and numerically that such models can be augmented in a simple way to better capture dynamical statistics.
\end{abstract}

\noindent
\textbf{Keywords:} Reduced-order modelling, Markovian approximate dynamics, Autocovariance error, Progressive coarse-graining\medskip

\noindent
\textbf{MSC Codes:} 60H10, 34F05

\tableofcontents

\section{Introduction}

\noindent
 The behaviour of a system of interest as it evolves in time is often modelled by a large number of interacting degrees of freedom. While such detailed models provide excellent accuracy in theory, their high resolution often renders the cost of simulating them repeatedly over long time scales prohibitive. This limits the applicability of such models, and as a result, the answers to many scientific questions lie outside the realms of what is computationally feasible. To remedy this, various coarse-graining (or model reduction) strategies have been developed, aiming to create cheap-to-simulate effective models which govern a smaller number of degrees of freedom. Such models significantly expand the possible applications of the computational resources available to theoretical scientists.

Coarse-graining is generally believed to be an effective approach when simulating a dynamical system which exhibits a significant time-scale separation. When present, such separation can be exploited in classical asymptotic approaches such as the method of multiple scales and stochastic averaging \cite{skorokhod2009asymptotic,white2010asymptotic}. These methods tend to assume that it is possible to assess and classify variables \textit{a priori} according to the time-scales on which they evolve. For example, in statistical physics a separation of timescales provides the motivation for the use of stochastic models through the Mori-Zwanzig formalism \cite{Mori1965,Zwanzig1973a,Nordholm1975}, and a range of approximate methods have been developed:
\begin{enumerate}
   \item 
   When the relaxation of the coarse-grained (CG) and unresolved variables exhibit significant timescale separation which is spatially homogeneous, one can approximate complex time correlated effects by a local-in-time fluctuations and dissipation, rendering the effective dynamics Markovian \cite{Chorin2013a,G+21}. There are numerous recent works concerning error estimates and the properties of this Markovian approach; see for example \cite{Legoll2010a,Legoll2012,Legoll2017a,Lelievre2018a,Pasquale2018,Legoll2018b,Hudson_2022a}.
    \item 
In case where there is no significant timescale separation between CG and unresolved variables, a Markovian approach may at best provide satisfactory approximation of equilibrium\cite{Berkowitz1983a,Chorin2000a,Guttenberg2013a,ZhenLi2015a}. A remedy in the case of spatially homogeneous effects is to approximate the memory term in a non-Markovian fashion, and fluctuations are treated as colored noised. Examples of the range of approaches available include \cite{Venturi2017b,Chu_Li2018,Stinis2019,Darve2009a,Yoshimoto2013a,FeiLu2015a,Lei2016a,Lin2021a,ZhenLi2015a}.
\end{enumerate}
The settings where CG strategies would have the most impact are in cases where the dynamical systems are extremely high-dimensional in nature, and so it is usually unclear which variables are slow and which are fast. As such, all CG methods require intuition or theoretical input in determining what an appropriate reduction for a system should be. In the case of molecular dynamics, this problem is widely recognised, and the development of new methods to better select CG variables or reaction coordinates remains a problem of significant interest.

Motivated by this broader problem of effectively coarse-graining models in statistical physics, we seek to advance the rigorous analysis of the accuracy of CG models. Here, we focus on simple case where there is sufficient structure to develop an instructive mathematical theory which can assess the strengths and weaknesses of different CG approaches. Starting from the linear overdamped Langevin equation with a linear coarse-graining map, we derive a hierarchy of approximate coarse-grained models. We then assess the accuracy of these approaches on the basis of their ability to capture observables of interest to molecular dynamics practitioners. The first of these observables is the Gibbs distribution of the reduced model, which captures the static equilibrium properties of the coarse-grained model. The second observable is the equilibrium autocovariance of the coarse-grained evolutions, representing the dynamical statistics of the model.
While the model we study is evidently much simpler than many molecular dynamics models, a particular case where our analysis and the methodology of constructing higher-order approximations in time could already find direct applications includes the normal mode analysis of proteins \cite{li2002coarse, tozzini2005coarse}, where coarse-graining can be performed at the level of residues.

The derivation of the hierarchy of approaches we study is based on a reformulation of the dynamics of the CG variables as an integro-differential equation, similar to the derivation of Generalised Langevin Equations via the Mori-Zwanzig formalism \cite{Mori1965,Zwanzig1961}. The integrals present in the resulting equations are then approximated via the truncation of a series expansion based upon formal asymptotic methods. In this expansion, an approach consistent with other CG strategies naturally appears at leading order, and a Markovian correction similar to that observed in \cite{Hudson_Li2020} appears at first-order.
In particular, the first-order approach can be seen as a form of predictor-corrector strategy in the time evolution, and as we show in our rigorous results, enables us to improve the time approximation error.

After deriving our CG approaches, we proceed to study them both analytically and numerically, focusing in particular on their ability to accurately capture the dynamical statistics for coarse-grained models. This particular focus lies in contrast to estimates obtained in previous rigorous mathematical work \cite{Legoll2010a,Legoll2017a,Lelievre2018a,Legoll2018b}. We note that while pathwise error estimates were presented in \cite{Lelievre2018a,Legoll2018b}, in \cite{Hudson_Li2020} we observed that these estimates are highly accurate for short time predictions, but may not necessarily be accurate over longer timescales. To the best of our knowledge, our results therefore provide the first rigorous analysis of the dynamical statistics for CG approaches. Our analysis proceeds via a direct approach, and we derive comprehensive short and long-time autocovariance error estimates for a two-dimensional case, quantifying the error in terms of the deviation from an optimal choice of CG variables and the extent of timescale separation present in the system. Intriguingly, we find that our first-order approach better captures autocovariance properties of the CG system overall, particular in the case of large time-scale separation, but that the leading-order approach better captures autocovariances over very short time-lags than our first-order approach.

To conclude our study, we explore a series of numerical examples both to validate our analytical results and to establish evidence regarding the validity of our conclusions for higher-dimensional systems. In higher-dimensional examples, we find consistently that our new first-order approach provides greater accuracy for the dynamical properties of the system, while maintaining accuracy on static equilibrium statistics, and we discuss further directions for exploration.

\paragraph{Outline.} The paper is organized as follows: In Section~\ref{sec:Setting}, we define the notation with which we describe our system, and specify what we mean by a coarse-graining map in our setting. In Section~\ref{sec:derivation}, we provide a formal derivation of the approximate dynamics we subsequently study. In Section~\ref{sec:statistics}, we present the statistical properties of the full dynamics and the approximate dynamics, and prove our first main result, Theorem~\ref{th:NDError}, providing a rigorous comparison of the autocovariance functions for the approximate dynamics we propose which is most informative for short time-lags. In Section~\ref{sec:2d_error}, we provide results which are accurate over a wider range of time-lags in a two-dimensional setting. Here, our main results are Theorem~\ref{th:2Derror-longtime} and Theorem~\ref{th:2Derror-shorttime}, which provide estimates where the dependence upon both the timescale separation of the system, and the alignment of a coarse-grained variable with the slowest degree of freedom are made explicit. Finally, in Section~\ref{sec:num}, we complement our rigorous results with a series of numerical examples, confirming that the insights of our rigorous analysis in two dimensions can be used to estimate the errors in high dimensions. For the reader's convenience, detailed proofs of our main analytical results have been left out of the main text, but are provided in Appendix~\ref{app:proofs}. 

\section{Setting}
\label{sec:Setting}
Throughout this paper, we use bold capital letters for matrices, e.g. $\bA$, bold lower-case letters for vectors, e.g. $\bq$, and italics for scalars, e.g. $t$. For matrices, we write the transpose as $\bA^*$. All vectors and numbers used are real.
To quantify the size of matrices, we use the Frobenius norm, which is defined to be
\[
    \|\bA\|_F:=\sqrt{\tr(\bA^*\bA)}.
\]

\subsection{Linear overdamped Langevin equation}
We consider the first-order stochastically-forced linear system of SDEs
\begin{equation}\label{eq:SDE}
    \dd\bq_t = -\bA\bq_t \dd t+\sqrt{2\beta^{-1}}\,\dd\bw_t.
\end{equation}
Since we consider a diffusion independent of position, we note that this system of SDEs can be viewed in either the It\^{o} or Stratonovich sense, and our results hold independently of the precise choice of interpretation.
We assume that $\bA$ is symmetric and strictly positive definite, which entails that the system is ergodic with respect to the invariant measure corresponding to the Boltzmann-Gibbs equilibrium distribution. In this case, the equilibrium distribution takes the form
\[
\mu(\dd\bq) = \frac{1}{Z}\exp\big(-\tfrac12 \beta \bq\cdot(\bA\bq)\big),
\]
i.e. a mean-zero Gaussian distribution $\mathcal{N}(\bzer,\beta^{-1}\bA^{-1})$. We note that while it is possible to consider more general linear systems than \eqref{eq:SDE} for our analysis, we focus on this choice since we are motivated by applications in statistical physics \cite{van1976stochastic}.

The general solution to \eqref{eq:SDE} can be expressed using the matrix exponential and stochastic integral as
\begin{equation}\label{eq:SDE_generalsoln}
  \bq_t = \e^{-\bA t} \bq_0+\int_0^t \e^{\bA(s-t)}\sqrt{2\beta^{-1}} \, \d\bw_s.
\end{equation}
If we suppose that the system is at thermal equilibrium at initial time, so that the initial conditions are distributed according to the equilibrium distribution, $\bq_0\sim \mathcal{N}(\bzer,\beta^{-1}\bA^{-1})$, then it follows that $\bq_t$ is stationary, i.e. it has an identical distribution for any $t$. Moreover, using the treatment of linear SDEs considered in \S3.7 of \cite{Pavliotis}, we can establish that the autocovariance of the solution (irrespective of the initial distribution) is
\begin{equation}\label{eq:fullautocov}
    \E[\bq_t\otimes\bq_s]= \beta^{-1}\bA^{-1}\e^{-|t-s|\bA}+\e^{-t\bA}\Big(\E[\bq_0\otimes\bq_0]-\beta^{-1}\bA^{-1}\Big)\e^{-s\bA}.
\end{equation}
We note that the latter term vanishes if $\bq_0$ is distributed according to the equilibrium distribution.

\subsection{Coarse-grained variables}
\label{sec:CGassumptions}
We suppose that \eqref{eq:SDE} represents an accurate physical system of interest which we would like to simplify for the purposes of generating predictions. In order to do so, we seek to reduce the number of degrees of freedom in the model. In particular, we suppose that there is a linear map $\widetilde{\Phi}\in \mathcal{L}(\R^N;\R^n)$ with $n< N$ which selects a collection of variables of interest. Selecting a linear combination of variables (even for a nonlinear model) is a very standard coarse-graining approach, since it tends to enhance the interpretability of the coarse-grained model.

In a slight abuse of notation, we will identify the linear operator $\widetilde{\Phi}$ with a matrix $\widetilde{\Phi}\in \mathbb{R}^{n\times N}$ defined with respect to fixed orthonormal coordinate bases for $\mathbb{R}^N$ and $\mathbb{R}^n$. We will assume that $\widetilde{\Phi}$ has full rank, which ensures there are no `redundant' coarse-grained variables. In cases where $\widetilde{\Phi}$ is not of full rank, it is always possible to reduce to the case we consider by redefining $\widetilde{\Phi}$ with a smaller $n$ by selecting a maximal collection of linearly independent rows.

Using the matrix $\widetilde{\Phi}$, we denote the coarse-grained variables $\widetilde{\bxi} := \widetilde{\Phi} \bq$. We introduce the
matrix
\begin{equation}\label{matrix_Phi}
\Phi = \big(\widetilde{\Phi}\widetilde{\Phi}^*\big)^{-\frac12}\widetilde{\Phi},
\end{equation}
and corresponding variables $\bxi := \Phi\bq=\big(\widetilde{\Phi}\widetilde{\Phi}^*\big)^{-\frac12}\widetilde{\bxi}$. It is straightforward to verify that
\[
\bP := \Phi^*\Phi = \widetilde{\Phi}^*\big(\widetilde{\Phi}\widetilde{\Phi}^*\big)^{-1}\widetilde{\Phi}
\]
is an orthogonal projection, and so we will assume that we can partition the identity of $\mathbb{R}^{N\times N}$ as $\bI = \bP + \Psi^*\Psi$, where $\Psi\in\R^{m\times N}$ is also a matrix of full rank with orthonormal rows, and $m:=N-n>0$. 

Now defining $\bzeta_t:=\Psi\bq_t$, we can write the original SDE system in block format as
\begin{equation}\label{eq:SDE_block}
  \left(\begin{array}{c}
          \dd\bxi_t\\
          \dd\bzeta_t
        \end{array}\right)=-
      \left(
        \begin{array}{cc}
          \bA_0 & \balpha \\
         \balpha^* & \bA_1
        \end{array}
      \right)
      \left(\begin{array}{c}
          \bxi_t\\
          \bzeta_t
            \end{array}\right)\dd t+\sqrt{2\beta^{-1}}
      \left(\begin{array}{c}
          \dd\bw_{0,t}\\
          \dd\bw_{1,t}
        \end{array}\right),
\end{equation}
    where we have set:
    \begin{equation}\label{eq:SubmatrixDefns}
    \begin{gathered}
        \bA_0 := \Phi\bA\Phi^*\in\R^{n\times n},\quad
        \balpha := \Phi\bA\Psi^*\in\R^{n\times m},\quad
        \bA_1 := \Psi\bA\Psi^*\in\R^{m\times m},\\
        \bw_{0,t} := \Phi\bw_t\in\R^n,\quad
        \text{and}\quad\bw_{1,t} := \Psi\bw_t\in\R^{m}.
    \end{gathered}
    \end{equation}
    
    \subsection{Solution of coarse-grained model}
    Following similar steps to those used to derive the general solution \eqref{eq:SDE_generalsoln} to \eqref{eq:SDE}, we can express the general solution to the equation for $\bzeta_t$ in the block system \eqref{eq:SDE_block} as
    \begin{equation}\label{eq:SDE_block_zetageneralsoln}
        \bzeta_t = \e^{-\bA_1 t}\bzeta_0-\int_0^t \e^{\bA_1(s-t)}\balpha^* \bxi_s \d s+\int_0^t \e^{\bA_1 (s-t)}\sqrt{2\beta^{-1}}\d \bw_{1,s}.
    \end{equation}
    We note that the equation satisfied by $\bxi_t$ in the block system \eqref{eq:SDE_block} is
    \begin{equation}\label{eq:XiFullEqn}
        \dd\bxi_t =- \bA_0\bxi_t\dd t-\balpha \bzeta_t\dd t + \sqrt{2\beta^{-1}}\dd\bw_{0,t}.
    \end{equation}
    By substituting the expression \eqref{eq:SDE_block_zetageneralsoln} into the second term on the right-hand side of this SDE, we have
    \begin{equation}\label{eq:XiCGEqn}
        \begin{split}
        \dd\bxi_t =- \bigg(\bA_0\bxi_t+\balpha \e^{-\bA_1 t}\bzeta_0-\int_0^t \!\!\balpha\e^{\bA_1(s-t)}\balpha^* \bxi_s \d s\bigg)\dd t
        \qquad\qquad\\+\bigg(\int_0^t \!\!\balpha\e^{\bA_1 (s-t)}\sqrt{2\beta^{-1}}\cdot\d \bw_{1,s}\bigg)\dd t + \sqrt{2\beta^{-1}}\dd\bw_{0,t}.
        \end{split}
    \end{equation}
    In this equation, we have obtained a closed equation for $\bxi_t$, up to prescribing an initial condition for $\bzeta_0$.
    
    The derivation we have just performed can be viewed as a form of the Mori-Zwanzig approach to model reduction \cite{Mori1965,Zwanzig1973a}, as we have expressed the evolution of the coarse-grained variables as a self-consistent evolution equation. However, the price for this transformation is that the equation has become non-local in time due to the two integral terms which now appear on the right-hand side of the equation. In the following section, we will approximate these integral terms, to derive coarse-grained models which are local in time.

    \section{Markovian approximate dynamics}
    \label{sec:derivation}
    In this section, we use the integrodifferential equation obtained in \eqref{eq:XiCGEqn} to derive Markovian equations which approximate its solution by replacing the integral terms which require evaluation at time $t$ only. We proceed formally to motivate the rigorous approximation results which follow.
    
    \subsection{Approximation of  integral terms}
    First, we consider the first integral term on the right-hand side of \eqref{eq:XiCGEqn}. By formally expanding $\bxi_s$ about $s=t$, exchanging the order of summation and integration, and rearranging, we obtain:
    \begin{align*}
        I&:=\int_0^t \!\!\balpha\e^{\bA_1(s-t)}\balpha^* \bxi_s \d s\\
        &= \int_0^t \!\!\balpha\e^{\bA_1(s-t)}\balpha^* \bigg(\bxi_t+(s-t)\dd\bxi_t +\sum_{j=2}^{\infty}\frac{(s-t)^j}{j!}\dd^{j}\bxi_t \bigg) \d s.
    \end{align*}
    Truncating the series, we have the approximation
    \[
     I \approx\underbrace{\bigg(\int_0^t \!\!\balpha\e^{\bA_1(s-t)}\balpha^* \dd s \bigg)\bxi_t}_{=:I_1}+\underbrace{\bigg(\int_0^t \!\!\balpha\e^{\bA_1(s-t)}\balpha^*  (s-t) \d s\bigg)\dd\bxi_t}_{=:I_2}.
    \]
    For values of $t$ which are sufficiently large, the integrals $\{I_j\}_{j=1}^{2}$ may be approximated by integrating over the interval $(-\infty,t)$. This leads to the further approximations
    \begin{equation*}
    \begin{aligned}
        I_1&\approx\bigg(\int_{-\infty}^{t}\balpha e^{\bA_1(s-t)}\balpha^*\d s\bigg) \bxi_t=
        \balpha\bA^{-1}\balpha^*\bxi_t,\\ \quad\text{and}\quad
        I_2&\approx\bigg(\int_{-\infty}^{t}\balpha e^{\bA_1(s-t)}\balpha^*(s-t)\d s\bigg)\d \bxi_t= -\balpha\bA_1^{-2}\balpha^*\dd\bxi_t.
        \end{aligned}
    \end{equation*}
    To handle the stochastic integral term on the right-hand side of \eqref{eq:XiCGEqn}, we apply It\^{o}'s formula.
    Recall that for any function $\bu:\R\times\R^k\to\R^l$ which is continuously differentiable in its first argument, and twice continuously differentiable in its second argument, we have
    \begin{equation*}
        \bu(t,\bw_t)-\bu(0,\bzer) 
        = \int_0^t\frac{\partial \bu}{\partial s}(s,\bw_s)\,\dd s 
        + \int_0^t \nabla\bu(s,\bw_s)\cdot\dd \bw_s
        + \frac12\int_0^t \Delta \bu(s,\bw_s)\dd s.
    \end{equation*}
    In this formula, the differential operators $\nabla$ and $\Delta$ are the usual $k$--dimensional gradient and Laplacian, which act component-wise on the vector argument of $\bu$ and the differentiation is performed with respect to the second variable.
    In order to use this result in our case, we set $\bu(s,\bx) = \balpha\e^{\bA_1(s-t)}\bx$ for any $\bx\in\R^m$, which allows us to write
    \begin{equation}\label{eq:ItoApplication}
        \int_0^t \balpha\e^{\bA_1(s-t)}\cdot\dd \bw_{1,s}=\balpha\bw_{1,t}
        -\int_0^t\balpha\bA_1\e^{\bA_1(s-t)}\bw_{1,s}\,\dd s.
    \end{equation}
    We now once again formally Taylor expand $\bw_{1,s}$ about $s=t$ on the right-hand side, and switch the order of summation and integration, and truncate the series, leading to the approximation
    \begin{align*}
        J:&=\int_0^t\balpha\bA_1\e^{\bA_1(s-t)}\bw_{1,s}\,\dd s\\
        &=\int_0^t\balpha\bA_1\e^{\bA_1(s-t)}\Big(\bw_{1,t}+(s-t)\dd\bw_{1,t}+\sum_{j=2}^{\infty} \frac{(s-t)^{j}}{j!}\dd ^{j} \bw_{1,t} \Big)\,\dd s\\
        &\approx\underbrace{\int_0^t\balpha\bA_1\e^{\bA_1(s-t)}\dd s\,\bw_{1,t}}_{=:J_1}
        +\underbrace{\int_0^t\balpha\bA_1\e^{\bA_1(s-t)}(s-t)\dd s\,\dd\bw_{1,t}}_{=:J_2}.
    \end{align*}
        As in the case of $I_1$ and $I_2$, we may approximate the deterministic integral factors by extending the regime of integration to $(-\infty,t)$ and by doing so, we obtain
    \begin{equation*}
        J_1 \approx \balpha \bw_{1,t}\quad\text{and}\quad J_2\approx -\balpha\bA_1^{-1} \dd\bw_{1,t}.
    \end{equation*}
    Note that $J_1$ cancels the first term on the right hand side of \eqref{eq:ItoApplication}.
    
    \subsection{Markovian approximate equations}
    \label{sec:ApproximateEqns}
    The formal arguments made above lead to a series possible approximations to the coarse-grained equation \eqref{eq:XiCGEqn}, depending on our choice of leading order approximation for the integral terms $I$ and $J$. Natural choices are to include zero, one or two terms in each of the approximations. In order to state these alternative effective dynamics in a concise form, we define
    \begin{equation}\label{eq:EffectiveMatrixDefns}
        \bB := \bA_0-\balpha\bA_1^{-1}\balpha^*\quad\text{and}\quad\bC:=(\bI+\balpha\bA_1^{-2}\balpha^*)^{-1},
    \end{equation}
    where we recall the definitions of the submatrices $\balpha$, $\bA_0$ and $\bA_1$ made in \eqref{eq:SubmatrixDefns}. The resulting equations are then:
    \begin{align}
   \text{Approach 0:}&&\dd\bxi_t =& -\Big(\bA_0\bxi_t-\balpha\e^{-\bA_1t}\bzeta_0\Big)\dd t+\sqrt{2\beta^{-1}}\dd\bw_{0,t}\label{Approx_eq0}\\
   \text{Approach 1:}&&    \dd\bxi_t =& -\Big(\bB\bxi_t-\balpha\e^{-\bA_1t}\bzeta_0\Big)\dd t+\sqrt{2\beta^{-1}}\dd\bw_{0,t}\label{Approx_eq1}\\
  \text{Approach 2:}&& \bC^{-1}\dd\bxi_t =& -\Big(\bB\bxi_t-\balpha\e^{-\bA_1t}\bzeta_0\Big)\dd t+\sqrt{2\beta^{-1}}\Big(\dd\bw_{0,t}+\balpha\bA_1^{-1}\dd\bw_{1,t}\Big).\label{Approx_eq2}
    \end{align}
    Approach 0 approximates the integral terms by neglecting them, i.e. by setting $I\approx0$ and $J\approx0$. Approach 1 includes the approximation of $I_1$ and $J_1$, and Approach 2 includes the approximation of $I_1$, $I_2$, $J_1$ and $J_2$.

    Note that in the final case where we introduce $\dd\bw_{1,t}$, we could replace the sum of two {independent} Brownian motions with a single Brownian motion
    \[
        \sqrt{2\beta^{-1}}\Big(\dd\bw_{0,t}+\balpha\bA_1^{-1}\dd\bw_{1,t}\Big)\sim
        \sqrt{2\beta^{-1}}\Big(\bI+\balpha\bA_1^{-2}\balpha^*\Big)^{\frac12}\dd\bw_{0,t}=
        \sqrt{2\beta^{-1}}\bC^{-\frac12}\dd\bw_{0,t},
    \]
    where the RHS has identical statistics to the sum of Brownian motions on the left due to the additive properties of the variance of Gaussian distributions. This means we can replace Approach 2 by
    \begin{equation}
        \text{Approach $2$: }\quad
        \begin{aligned}
        \dd\bxi_t =& -\bC
        \Big(\bB\bxi_t-\balpha\e^{-\bA_1t}\bzeta_0\Big)\dd t+\sqrt{2\beta^{-1}\bC}\dd\bw_{0,t}.
        \end{aligned}
    \label{Approx_eq3'}
    \end{equation}
    Solutions of this SDE are statistically equivalent to \eqref{Approx_eq2}, and we will use this definition from now on.
    
    \begin{remark}\label{rmk:App2}
    An alternative way to derive Approach~2 is to first integrate Approach~1 to get an approximation of $\bxi_t$ and then plug it into \eqref{eq:XiCGEqn}. This type of strategy is widely used when constructing predictor-corrector numerical integrators \cite{Butcher2003}. In fact, we can continue this Predict–Evaluate–Correct (PEC) strategy to further improve the approximation of $\dd \bxi_t$ if the corrector method is convergent.
    \end{remark}

    \begin{remark}\label{rmk:AppN}
    We note that we chose to truncate the formal series approximating $I$ and $J$ at first-order, but we could have retained further terms. Doing so would have resulted in an SDE system to approximate the coarse-grained which was higher-order in time, requiring an increased number of variables to describe properly. This idea is closely related to the approximation approach studied in \cite{kupferman2002long,ottobre2011asymptotic,Chu_Li2019}. 
      \end{remark}
    
    \begin{remark}
    Note that if the dimensionality of the original model $N$ is very large, directly computing and storing $\bA_1^{-1}$ will be computationally infeasible. In such cases, it follows that assembling the matrices $\bB$ and $\bC$ directly may not be possible, and hence they must be approximated in practice (see for example \cite{krieger2015new}). On the other hand, our derivation does elucidate the right structural choice for the coarse-grained dynamics, and this seems a promising way to inform data-driven approaches to fitting coarse-grained models in future.
    \end{remark}

    \section{Statistical properties of dynamics}
    \label{sec:statistics}
    We now consider the various dynamical approaches we have proposed as Markovian approximations to the true coarse-grained dynamics given in \eqref{eq:XiCGEqn}, and derive and compare various statistical properties of these alternatives with those of the true dynamics. In particular, we study the  equilibrium statistics for the very long-time property and the mean-squared displacement and autocovariance for the short and medium-time properties.
    
    \subsection{Equilibrium statistics}
    \label{sec:equilibrium}
    Independently of the initial conditions for the true dynamics, we note that as $t\to\infty$, the distribution of $\bxi_t$ evolving under the original \eqref{eq:XiCGEqn} tends to a Gaussian equilibrium distribution, $\mathcal{N}(\bzer,\beta^{-1}\Phi\bA^{-1}\Phi^*)$, i.e. a multivariate normal distribution with mean $\bzer$ and covariance matrix $\Phi\bA^{-1}\Phi^*$. Using the theory of Schur complements \cite{HJ85}, we have that
    \begin{equation}\label{eq:SchurInverse}
        \Phi\bA^{-1}\Phi^* = (\bA_0-\balpha\bA_1^{-1}\balpha^*)^{-1} = \bB^{-1}.
    \end{equation}
    For $\bxi_t$ evolving under Approaches 1 and 2, it can be checked that the resulting equilibrium distribution for is identical. On the other hand, the equilibrium distribution for Approach 0 has covariance $\bA_0^{-1}$. These facts are encoded in the following proposition:
    
    \begin{proposition}\label{Prop_equilibrium}
      Approaches 0, 1 and 2 all have uniquely-defined equilibrium distributions, which are Gaussian with mean $\bzer$ and covariances $\beta^{-1}\bA_0^{-1}$, $\beta^{-1}\bB^{-1}$ and $\beta^{-1}\bB^{-1}$ respectively. In each case, the dynamics is ergodic with respect to this distribution.
      
      As a consequence, the equilibrium distributions for Approaches 1 and 2 always agree with the true equilibrium distribution of $\bxi_t$ evolving under the full dynamics. The equilibrium distribution for $\bxi_t$ evolving under Approach 0 agrees with the true equilibrium distribution if and only if the $n$ rows of $\Phi$ span a subspace generated by $n$ linearly independent eigenvectors of $\bA$.
    \end{proposition}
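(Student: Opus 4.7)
The strategy is to treat each Approach as a linear SDE whose only explicitly time-dependent forcing, $\balpha e^{-\bA_1 t}\bzeta_0$, decays exponentially (since $\bA_1 = \Psi\bA\Psi^*$ is symmetric positive definite, SPD, as $\bA$ is SPD and $\Psi$ has orthonormal rows). The long-time behaviour of $\bxi_t$ therefore coincides with that of the autonomous Ornstein--Uhlenbeck equation obtained by dropping this transient term, so existence, uniqueness and ergodicity of a Gaussian equilibrium reduce to the classical theory of linear SDEs once the drift matrices are shown to be stable and the diffusions are nondegenerate.

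For the stability step I would verify each drift matrix is SPD, or at least has spectrum in the open right half-plane. The matrix $\bA_0 = \Phi\bA\Phi^*$ is SPD by the same orthonormality argument used for $\bA_1$. The matrix $\bB$ is SPD because the Schur complement identity \eqref{eq:SchurInverse} writes $\bB^{-1} = \Phi\bA^{-1}\Phi^*$, which is SPD. The matrix $\bC = (\bI + \balpha\bA_1^{-2}\balpha^*)^{-1}$ is SPD with eigenvalues in $(0,1]$, and although $\bC\bB$ is not symmetric it is similar to the SPD matrix $\bC^{1/2}\bB\bC^{1/2}$, so has real positive spectrum. With this in place, the equilibrium covariance in each case is the unique solution of the Lyapunov equation, which I would compute by direct substitution: $\bA_0\bSig + \bSig\bA_0 = 2\beta^{-1}\bI$ is solved by $\bSig = \beta^{-1}\bA_0^{-1}$; the analogous equation with $\bB$ in place of $\bA_0$ is solved by $\beta^{-1}\bB^{-1}$; and for Approach 2 in the form \eqref{Approx_eq3'} the equation $(\bC\bB)\bSig + \bSig(\bC\bB)^* = 2\beta^{-1}\bC$ collapses to the identity $\beta^{-1}\bC + \beta^{-1}\bC = 2\beta^{-1}\bC$ on substituting $\bSig = \beta^{-1}\bB^{-1}$ and using symmetry of $\bB$ and $\bC$. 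Since \eqref{eq:SchurInverse} identifies $\bB^{-1}$ as the true equilibrium covariance of $\bxi_t$, Approaches 1 and 2 match the full equilibrium unconditionally.

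For the final characterisation I would observe that Approach 0 matches the true equilibrium iff $\bA_0 = \bB$, which by \eqref{eq:EffectiveMatrixDefns} is equivalent to $\balpha\bA_1^{-1}\balpha^* = 0$. Writing the associated quadratic form as $\bv^*\balpha\bA_1^{-1}\balpha^*\bv = \|\bA_1^{-1/2}\balpha^*\bv\|^2$ and using that $\bA_1^{-1}$ is SPD, this forces $\balpha = \Phi\bA\Psi^* = 0$. Since the decomposition $\bI = \Phi^*\Phi + \Psi^*\Psi$ identifies $\ker\Phi$ with the row span of $\Psi$, the identity $\Phi\bA\Psi^* = 0$ is the statement $\bA V^{\perp} \subseteq V^{\perp}$, where $V$ is the row span of $\Phi$; symmetry of $\bA$ then promotes this to $\bA V \subseteq V$, and the spectral theorem applied to the self-adjoint restriction $\bA|_V$ yields the required orthonormal basis of $V$ consisting of eigenvectors of $\bA$. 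The converse direction is immediate from the same calculation run backwards. The only mildly delicate point is this final passage between $\balpha = 0$ and the eigenbasis condition, but the orthogonal decomposition together with the symmetry of $\bA$ makes the translation transparent.
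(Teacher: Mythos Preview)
Your proposal is correct and follows essentially the same route as the paper: the first part is handled by standard linear SDE/Ornstein--Uhlenbeck theory (which the paper simply cites from \cite{Pavliotis}, while you reconstruct the Lyapunov-equation computation explicitly), and the second part reduces to the equivalence $\bA_0^{-1}=\bB^{-1}\Leftrightarrow\balpha=\bzer\Leftrightarrow$ the rows of $\Phi$ span an $\bA$-invariant subspace, exactly as in Lemma~\ref{th:EquivalenceLemma}. The only cosmetic difference is that the paper proves the final equivalence by an explicit block-diagonalisation and eigenvector chase, whereas your invariant-subspace argument ($\Phi\bA\Psi^*=0\Leftrightarrow\bA V^\perp\subseteq V^\perp\Leftrightarrow\bA V\subseteq V$) is a slightly cleaner packaging of the same idea.
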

    \medskip

    \noindent
    The first part of this result is a direct corollary of Proposition~4.2 in \cite{Pavliotis} applied to each of the equations, so we omit a proof. The latter part of this result is implied by the following lemma, which provides conditions under which the covariance of of the equilibrium distribution for Approach 0 agrees with the true covariance: a detailed proof is provided in Appendix~\ref{sec:EquivalenceProof}.
    
    \begin{lemma}\label{th:EquivalenceLemma}
    Assuming the definitions given in \eqref{eq:SubmatrixDefns} and \eqref{eq:EffectiveMatrixDefns}, the following statements are equivalent:
    \begin{itemize}
        \item $\bA_0^{-1}=\bB^{-1}$;
        \item $\balpha=\Phi\bA\Psi^*=\bzer$; and
        \item The $n$ rows of $\Phi$ span a subspace generated by $n$ linearly independent eigenvectors of $\bA$.
    \end{itemize}
    \end{lemma}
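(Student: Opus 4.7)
The plan is to establish the two equivalences (i)$\iff$(ii) and (ii)$\iff$(iii) separately; the first is purely algebraic, while the second is a consequence of the spectral theorem applied to invariant subspaces.

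For (i)$\iff$(ii), I would start from the definition $\bB = \bA_0 - \balpha\bA_1^{-1}\balpha^*$. Then $\bA_0^{-1} = \bB^{-1}$ iff $\bA_0 = \bB$ iff $\balpha\bA_1^{-1}\balpha^* = \bzer$. Since $\bA$ is symmetric positive definite, so is the principal submatrix $\bA_1 = \Psi\bA\Psi^*$, and hence $\bA_1^{-1}$ admits a factorization $\bL\bL^*$ with $\bL$ invertible. Then $\balpha\bA_1^{-1}\balpha^* = (\balpha\bL)(\balpha\bL)^*$ is zero iff $\balpha\bL = \bzer$, which, by invertibility of $\bL$, holds iff $\balpha = \bzer$. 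This closes the first equivalence.

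For (ii)$\iff$(iii), the key observation is that $\balpha = \Phi\bA\Psi^* = \bzer$ says precisely that $\bA$ maps the range of $\Psi^*$ into a subspace orthogonal to the range of $\Phi^*$. Since the identity splits as $\bI = \Phi^*\Phi + \Psi^*\Psi$, the ranges of $\Phi^*$ and $\Psi^*$ are mutually orthogonal complements in $\R^N$, so this condition is equivalent to $\bA(\mathrm{range}(\Psi^*)) \subseteq \mathrm{range}(\Psi^*)$. By symmetry of $\bA$, invariance of $\mathrm{range}(\Psi^*)$ forces invariance of its orthogonal complement $\mathrm{range}(\Phi^*)$: for $\bu \in \mathrm{range}(\Phi^*)$ and $\bv \in \mathrm{range}(\Psi^*)$, one has $\langle \bA\bu,\bv\rangle = \langle \bu,\bA\bv\rangle = 0$ since $\bA\bv \in \mathrm{range}(\Psi^*)$. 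The restriction of $\bA$ to the $n$-dimensional invariant subspace $\mathrm{range}(\Phi^*)$ is then a symmetric operator, so by the spectral theorem it admits an orthonormal basis of eigenvectors, producing $n$ linearly independent eigenvectors of $\bA$ spanning the row space of $\Phi$. Conversely, if the rows of $\Phi$ span an $n$-dimensional space generated by $n$ linearly independent eigenvectors of $\bA$, then $\mathrm{range}(\Phi^*)$ is $\bA$-invariant, its orthogonal complement $\mathrm{range}(\Psi^*)$ is also $\bA$-invariant by the symmetric argument above, and $\Phi\bA\Psi^*$ vanishes.

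There is no serious obstacle here: the arithmetic step (i)$\iff$(ii) only requires using positive-definiteness to conclude that a PSD expression vanishing forces the underlying factor to vanish, and the geometric step (ii)$\iff$(iii) is a standard invariant-subspace/spectral theorem argument. The only care needed is to note carefully that the normalization in \eqref{matrix_Phi} makes $\Phi$ and $\Psi$ matrices with orthonormal rows so that $\bI = \Phi^*\Phi + \Psi^*\Psi$ really is an orthogonal decomposition, which is what allows invariance on one factor to transfer to the other via symmetry of $\bA$.
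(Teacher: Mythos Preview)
Your proof is correct. The algebraic step (i)$\iff$(ii) matches the paper's treatment (the paper simply notes it is ``easy to verify'' from positive definiteness of $\bA_1$; you spell out the Cholesky factor, which is fine).

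For (ii)$\iff$(iii) you take a different route from the paper. The paper argues concretely: it forms the orthogonal matrix $\bU=\begin{pmatrix}\Phi\\\Psi\end{pmatrix}$, observes that $\balpha=\bzer$ makes $\bU\bA\bU^*$ block-diagonal, and then reads off eigenvectors of $\bA$ from the top block via $\Phi^*$; for the converse it applies $\Psi$ directly to the eigenvalue equation $\lambda\Phi^*\bv_0=\bA\Phi^*\bv_0$ and uses $\Psi\Phi^*=\bzer$ to obtain $\balpha^*\bv_0=\bzer$ on a basis. Your argument is the coordinate-free version of the same idea: you interpret $\balpha=\bzer$ as $\bA$-invariance of $\mathrm{range}(\Psi^*)$, transfer invariance to the orthogonal complement $\mathrm{range}(\Phi^*)$ via symmetry of $\bA$, and invoke the spectral theorem on the restriction. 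Both arguments are standard; yours is slightly more conceptual and makes the role of symmetry of $\bA$ explicit, while the paper's is more hands-on and avoids appealing to the spectral theorem on a subspace.
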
\medskip
    
    \noindent
    We note further that in the case where the rows of $\Phi$ span a collection of eigenspaces $\bA$, we have the stronger result that the equations \eqref{Approx_eq0}--\eqref{Approx_eq2} defining Approaches 0--2 are identical, since the result of Lemma~\ref{th:EquivalenceLemma} guarantees that $\bB=\bA_0$, and moreover
    \[
        \bC = (\bI+\balpha\bA_1^{-2}\balpha^*)^{-1} = \bI.
    \]
    In other words, if the row-space of the coarse-graining map $\Phi$ is exactly a span of eigenvectors of the matrix $\bA$, then the coarse-grained system completely decouples from the remaining variables.
    
    In a practical coarse-graining setting, managing to accurately capture a large collection of eigenspaces of the dynamics via the coarse-graining map without doing so intentionally is highly improbable. The fact that the approaches do collapse in this way does however suggest that, under the simple choice of dynamics \eqref{eq:SDE}, designing a coarse-graining map which captures eigenspaces of $\bA$ will lead to accurate effective dynamics, regardless of the sophistication of the approach chosen.

    \subsection{Mean-squared displacement and covariance}
    \label{sec:MSD}
    Above, we have summarised the equilibrium statistics for the different coarse-graining approaches we proposed in \eqref{Approx_eq0}-\eqref{Approx_eq2}. Similar properties have been studied extensively in previous works, and so the focus of our work here is on the dynamical statistical properties of the system.
    We note that Proposition~\ref{Prop_equilibrium} implies that in general, Approach~0 incorrectly predicts the equilibrium statistics of the coarse-grained variables, and for this reason, we will ignore this approach from now on, considering only Approaches 1 and 2.
    
    In many applications in statistical physics, and particularly in the simulation of molecular diffusion in solutes, a natural dynamical statistical quantity of interest is the mean-squared displacement of a particle. Mathematically, this quantity is closely related to the autocovariance of the evolution, as we discuss here. In particular, the mean-squared displacement of the stochastic process $\bxi_t$ is defined to be
    \begin{equation}\label{MSE_def}
      D(t):=\E\big[\|\bxi_t-\bxi_0\|^2\big],
    \end{equation}
    where $\bxi_0$ is a deterministic initial condition for the evolution of $\bxi_t$.
    We note that in general, this definition is dependent upon the choice of $\bxi_0$, but we suppress this dependence in our notation. We note that we can rewrite $D(t)$ as
    \[
      D(t)=\tr\Big(\E\big[(\bxi_t-\bxi_0)\otimes(\bxi_t-\bxi_0) \big]\Big),
    \]
    where $\otimes$ is the usual dyadic product.
    Then, by adding and subtracting $\E[\bxi_t]$ and $\E[\bxi_0]$ and using standard properties of expectations, we obtain
    \begin{align*}
      &\E\big[(\bxi_t-\bxi_0)\otimes(\bxi_t-\bxi_0) \big]\\
      &\qquad=\E\big[(\bxi_t-\E[\bxi_t]+\E[\bxi_t-\bxi_0]+\E[\bxi_0]-\bxi_0)\otimes(\bxi_t-\E[\bxi_t]+\E[\bxi_t-\bxi_0]+\E[\bxi_0]-\bxi_0) \big],\\
           &\qquad=\Cov(\bxi_t,\bxi_t)+\E[\bxi_t-\bxi_0]\otimes\E[\bxi_t-\bxi_0]+\Cov(\bxi_0,\bxi_0)-\Cov(\bxi_0,\bxi_t)-\Cov(\bxi_t,\bxi_0),
    \end{align*}
    where the covariance matrix for two multivariate random variables is
    \[
        \Cov(\bu,\bv) := \E\Big[\big(\bu-\E[\bu]\big)\otimes\big(\bv-\E[\bv]\big)\Big].
    \]
    Taking the trace, we have
    \[      
    D(t)=\big\|\E[\bxi_t-\bxi_0]\big\|^2+\tr[\Cov(\bxi_t-\bxi_0,\bxi_t-\bxi_0)].
    \]
    We hence see that the mean-squared displacement is the sum of two contributions: the squared mean of the displacement and the trace of the covariance matrix of displacement at time $t$ relative to the position at initial time.
    
    In the case where the mean of the initial condition is zero and remains zero for all time, so that $\E[\bxi_0]=\E[\bxi_t]=\E[\bxi_t-\bxi_0]=\bzer$, the mean-squared displacement is completely determined by the displacement covariance matrix, $\Cov(\bxi_t-\bxi_0,\bxi_t-\bxi_0)$.
    Moreover, if we further assume that the initial condition is deterministic with $\bxi_0=\bzer$, then $\Cov(\bxi_0,\bxi_0)$ and $\tr[\Cov(\bxi_0,\bxi_t)]$ are zeros, and hence $D(t)$ is exactly the trace of $\Cov(\bxi_t,\bxi_t)$. As we will see below, different coarse-graining approaches provide a range of approximations to the covariance matrix.
    
    \subsection{Autocovariance of full model}
    In order to compare the statistics of interest, we first compute the autocovariance of $\bxi_t$ evolving under the full model, \eqref{eq:XiCGEqn}, where the initial conditions $\bxi_0=\Phi\bq_0$ and $\bzeta_0=\Psi\bq_0$ are assumed to be fully deterministic. In this case, it is staightforward to show that the mean and covariance of $\bxi_t$ are
    \begin{equation*}
        \E[\bxi_t] = \Phi\e^{-t\bA}\bq_0,\quad\text{and}\quad
        \Cov(\bxi_s,\bxi_t)
    =\beta^{-1}\Phi\bA^{-1}\big(\e^{-|s-t|\bA}-\e^{-(s+t)\bA}\big)\Phi^*.
    \end{equation*}
    If $s=t$, the latter becomes
    $\Cov(\bxi_t,\bxi_t)=\beta^{-1}\Phi\bA^{-1}\big(\bI-\e^{-2t\bA}\big)\Phi^*$, and we note that the autocovariance at initial time is zero, so the mean-squared displacement is
    \begin{align*}
      D(t)&=\tr[\Cov(\bxi_t,\bxi_t)]+\|\E[\bxi_t-\bxi_0]\|^2\\
      &= \beta^{-1}\tr\big[\Phi\bA^{-1}\big(\bI-\e^{-2t\bA}\big)\Phi^*\big]+\|\Phi(\e^{-t\bA}-\bI)\bq_0\|^2.
    \end{align*}
    As noted above, if $\bq_0=\bzer$, the mean-squared displacement is completely determined by the covariance of the evolution, becoming
    \[
        D(t)= \beta^{-1}\tr\big[\Phi\bA^{-1}\big(\bI-\e^{-2t\bA}\big)\Phi^*\big].
    \]
    Although we have assumed a deterministic initial condition here, we note that the dynamics converges to a unique equilibrium regardless of the precise initial distribution, and indeed as $s,t\to\infty$ with $|s-t|$ bounded, the autocovariance in both of the above cases tends towards
  \[
        \Cov(\bxi_s,\bxi_t) \to \beta^{-1}\Phi\bA^{-1}\e^{-|s-t|\bA}\Phi^*.    
  \]
    To this end, we define the equilibrium autocovariance function $\bR:[0,+\infty)\to\R^{n\times n}$ to be
      \begin{equation}\label{covLag_full}
        \bR(\tau) = \beta^{-1}\Phi\bA^{-1}\e^{-\tau\bA}\Phi^*.
      \end{equation}
    
    \subsection{Autocovariance of approximate models}
    We now compute the autocovariance properties of the approximate models in turn. All results are summarised in Table~\ref{tab:summary}.
    
    \paragraph{Approach 1.}
    Here, the expectation satisfies the equation
    \[
        \frac{\dd}{\dd t}\E[\bxi_t] = -\bB\E[\bxi_t]+\balpha\e^{-\bA_1 t}\E[\bzeta_0],
    \]
    which can be integrated to find that
    \[
    \E[\bxi_t] = \e^{-\bB t}\bxi_0+\int_0^t\e^{\bB (s-t)}\balpha\e^{-\bA_1 s}\E[\bzeta_0]\dd s.
    \]
    If $\bxi_0=\bzer$ and $\E[\bzeta_0]=\bzer$, we see that $\E[\bxi_t]=\bzer$ for all time, and we make this assumption to simplify our calculations.
    To compute the autocovariance, we apply the result of Proposition~3.5 in \cite{Pavliotis}, giving
    \[
        \Cov(\bxi_s,\bxi_t) 
        = \beta^{-1}\bB^{-1}\e^{-|t-s|\bB}-\beta^{-1}\bB^{-1}\e^{-(s+t)\bB}.
    \]
    Taking $s=t+\tau$ and letting $t\to\infty$, we define the equilibrium autocovariance for Approach~1 to be $\bR_1:[0,+\infty)\to\R^{n\times n}$, where
    \begin{equation}\label{covLag_App1}
    \bR_1(\tau):=\beta^{-1}\bB^{-1}\e^{-\tau\bB}.
    \end{equation}
    Since $\bxi_0$ is deterministic, $\Cov(\bxi_0,\bxi_0)=\bzer$ and hence the mean-squared displacement is therefore
    \[
    D_1(t):= \beta^{-1}\bB^{-1}\big(\bI-\e^{-2t\bB}\big).
    \]
    
    \paragraph{Approach 2.}
    For Approach~2, similar computations to those performed for Approach~1 yield
    \[
    \E[\bxi_t] = \e^{-\bC\bB t}\bxi_0+\int_0^t\e^{\bC\bB (s-t)}\balpha\e^{-\bA_1 s}\E[\bzeta_0]\dd s.
    \]
    Again, assuming that $\bxi_0=\bzer$ and $\E[\bzeta_0]=\bzer$ entails that $\E[\bxi_t]=\bzer$ for all $t$, and we make this assumption to simplify our calculations.
    To compute the autocovariance, we again apply the result of Proposition~3.5 in \cite{Pavliotis}, giving
    \[
    \Cov(\bxi_s,\bxi_t) = 2\beta^{-1}\int_0^{\min(s,t)}\e^{(\tau-t)\bC\bB}\bC\e^{(\tau-s)\bB\bC}\dd\tau.
    \]
    Using the definition of the matrix exponential, the integrand can be written as
    \[
        \e^{\tau\bC\bB}\bC\e^{\tau\bB\bC} = \bC\e^{2\tau\bB\bC},
    \]
    which allows us to express the integral explicitly as
    \begin{equation*}
    \Cov(\bxi_s,\bxi_t) 
    =\beta^{-1}\bB^{-1}\e^{-|s-t|\bB\bC}-\beta^{-1}\bB^{-1}\e^{-(s+t) \bB\bC}.
    \end{equation*}
    Again, taking $s=t+\tau$ and letting $t\to\infty$, we define the equilibrium autocovariance for Approach~2 to be $\bR_2:$       \begin{equation}\label{covLag_App2}
        \bR_2(\tau):=\beta^{-1}\bB^{-1}\e^{-\tau\bB\bC}.
    \end{equation}
    The mean-squared displacement is then
    \[
    D_2(t):=\beta^{-1}\bB^{-1}\big(\bI-\e^{-2t\bB\bC}\big).
    \]
 
    A summary of different approaches is given in Table~\ref{tab:summary}.
    \begin{table}[htp!]
        \centering
        \begin{tabular}{|c|l|l|}\hline
             & \multicolumn{1}{|c|}{\rule{0pt}{5mm}Mean-squared displacement} & \multicolumn{1}{|c|}{Equilibrium autocovariance} \\[1mm]
             \hline \rule{0pt}{5mm}
             Full dynamics & $D(\tau)=\beta^{-1}\tr[\Phi\bA^{-1}(\bI-\e^{-2\tau\bA})\Phi^*]$ & $\bR(\tau)=\beta^{-1}\Phi\bA^{-1}\e^{-\tau\bA}\Phi^*$ \\[1mm]
             Approach 1 & $D_1(\tau)=\beta^{-1}\tr[\bB^{-1}(\bI-\e^{-2\tau\bB})]$ & $\bR_1(\tau)=\beta^{-1}\bB^{-1}\e^{-\tau\bB}$ \\[1mm]
             Approach 2 & $D_2(\tau)=\beta^{-1}\tr[\bB^{-1}(\bI-\e^{-2\tau\bB\bC})]$ & $\bR_2(\tau)=\beta^{-1}\bB^{-1}\e^{-\tau\bB\bC}$\\[1mm]
             \hline
        \end{tabular}
        \caption{Summary of long-time equilibrium autocovariance functions and mean-squared displacement for different dynamical approaches under the assumption that $\bxi_0=\bzer$ and $\E[\bzeta_0]=\bzer$.
        $\bPhi$ is defined in \eqref{matrix_Phi}, and
        $\bB := \bA_0-\balpha\bA_1^{-1}\balpha^*$ and $\bC:=(\bI+\balpha\bA_1^{-2}\balpha^*)^{-1}$ are defined in \eqref{eq:EffectiveMatrixDefns}.}
        \label{tab:summary}
    \end{table}
    
    \subsection{A matrix form of Jensen's inequality}
    In order to provide global bounds on the autocovariance error we will use a technical result, which is a version of Jensen's inequality for matrices. The form of the result is an adaptation of aspects of the results of \cite{AMS07} or \cite[Theorem 2.1]{hansen2003jensen}.

    In order to state this result, we introduce the L\"{o}wner partial ordering on matrices \cite{L34}, which will be used throughout the remainder of this work. We write $\bA\leq \bB$ if and only if $\bB-\bA$ is positive definite, and for any $x\in\R$, we write $x\leq \bA$ to mean that $x\bI\leq\bA$. In particular, $0\leq \bA$ means that $\bA$ is positive definite. With this notation in place, we may state and prove the following result.
    
    \begin{theorem}\label{th:Jensen}
    Suppose that $f:(0,+\infty)\to\R$ is monotone and convex, and extend the action of this function to real, positive definite matrices via the standard identification
    \[
      f(\bA) = f(\bQ\bD\bQ^*) = \bQ f(\bD)\bQ^*:=\bQ \,\mathrm{diag}\big[f(\bD_{11}),\dots,f(\bD_{NN})\big]\bQ^*,
    \]
    where $\bQ\bD\bQ^*$ is the diagonalisation of $\bA\in \bR^{N\times N}$, so $\bQ$ is an orthogonal matrix and $\bD$ is a diagonal matrix.
    Then, if $\Phi\in \bR^{n\times N}$ satisfies $\Phi\Phi^*=\bI_n$, it holds that
    \[
        f(\Phi\bA\Phi^*)\leq \Phi f(\bA) \Phi^*  
    \]
    for all symmetric positive definite matrices $\bA$. For a more general matrix $\bPhi$, if we further define $\bSig=\sqrt{\Phi\Phi^*}$  then
    \[
    \bSig^* \Big(f\big(\bSig^{-1}\Phi\bA\Phi^*(\bSig^*)^{-1}\big)\Big)\bSig\leq \Phi f(\bA)\Phi^*.
    \]
    
    \end{theorem}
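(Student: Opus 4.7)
The plan is to reduce the first inequality to a compression form of Jensen's inequality for an isometric embedding of $\R^n$ into $\R^N$, and then to handle the general full-rank case by a symmetric change of variables that brings us back to the isometric case.

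For the first statement, I would diagonalise $\bA = \bU\bD\bU^*$ with $\bU$ orthogonal and $\bD$ a diagonal matrix of strictly positive eigenvalues, so that $f(\bA) = \bU f(\bD)\bU^*$ by the definition of the matrix function. Setting $\bW := \Phi\bU$, the hypothesis $\Phi\Phi^* = \bI_n$ gives $\bW\bW^* = \bI_n$, so $\bW^*$ is an isometry from $\R^n$ into $\R^N$. Both sides of the target inequality rewrite as $\Phi\bA\Phi^* = \bW\bD\bW^*$ and $\Phi f(\bA)\Phi^* = \bW f(\bD)\bW^*$, reducing the claim to
\[
f(\bW\bD\bW^*) \leq \bW f(\bD)\bW^*.
\]
This is the compression/Jensen inequality for the operator function $f$, and I would prove it by extending $\bW^*$ to an orthogonal $N\times N$ matrix $\bQ = [\bW^*\ \bW_\perp^*]$, noting that $\bW\bD\bW^*$ is the upper-left $n\times n$ block of $\bQ^*\bD\bQ$, writing this block as an average of two unitary conjugates of $\bD$, and then applying the midpoint convexity of $f$ (equivalently, invoking the Hansen--Pedersen--Jensen theorem directly with the isometry $\bW^*$). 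Monotonicity of $f$ enters in ensuring that the spectral definition behaves consistently on positive operators throughout these manipulations.

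For the second statement with a general full-rank $\Phi$, I would reduce to the first via a symmetric change of variables. Set $\widetilde\Phi := \bSig^{-1}\Phi$, where $\bSig := \sqrt{\Phi\Phi^*}$ is symmetric positive definite and hence invertible (using that $\Phi$ has full row rank). A direct computation yields $\widetilde\Phi\widetilde\Phi^* = \bSig^{-1}\Phi\Phi^*\bSig^{-1} = \bI_n$, so the first part applied to $\widetilde\Phi$ gives
\[
f\!\left(\bSig^{-1}\Phi\bA\Phi^*\bSig^{-1}\right) \leq \bSig^{-1}\Phi f(\bA)\Phi^*\bSig^{-1}.
\]
Multiplying on the left by $\bSig^*$ and on the right by $\bSig$ preserves the L\"owner order because $\bSig$ is symmetric positive definite, and the right-hand side collapses to $\Phi f(\bA)\Phi^*$ after the cancellations $\bSig^*\bSig^{-1} = \bI = \bSig^{-1}\bSig$, yielding the stated inequality.

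The main obstacle is the compression step in the first part. In contrast with the scalar Jensen inequality, one cannot simply expand an arbitrary test vector in the eigenbasis of $\bW\bD\bW^*$ and apply the pointwise inequality, because that argument only controls the diagonal entries of $\Phi f(\bA)\Phi^* - f(\Phi\bA\Phi^*)$ in that basis and gives no information about the off-diagonal entries required for positive semi-definiteness. The block-matrix / Hansen--Pedersen route circumvents this by encoding the compression as an average of two unitary conjugates of $\bD$, which combines cleanly with convexity and respects the operator ordering. Once the isometric case is established, the passage to the general full-rank case is a routine conjugation.
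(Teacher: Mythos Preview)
Your reduction of the general full-rank case to the isometric case via $\widetilde\Phi=\bSig^{-1}\Phi$ and conjugation by $\bSig$ is exactly the paper's argument. For the isometric case itself, however, the paper does not argue constructively at all: it simply observes that $\phi(\bA):=\Phi\bA\Phi^*$ is a positive unital linear map and invokes Proposition~5.2 of the cited reference directly, without unpacking the block-matrix mechanism.

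Your block-averaging route has a genuine gap. Writing the compression as $\tfrac12(M+VMV)$ with $M=\bQ^*\bD\bQ$ and $V$ the block sign matrix, the step you call ``midpoint convexity'' is the L\"owner inequality
\[
f\!\Big(\tfrac12(M+VMV)\Big)\ \leq\ \tfrac12\big(f(M)+f(VMV)\big),
\]
and this is midpoint \emph{operator} convexity of $f$, which for continuous $f$ is equivalent to full operator convexity. Scalar convexity plus scalar monotonicity does not deliver it: take $f(x)=x^4$ (monotone and convex on $(0,\infty)$ but not operator convex), $\bA=\left(\begin{smallmatrix}2&1&0\\1&2&1\\0&1&2\end{smallmatrix}\right)$, and $\Phi$ the projection onto the first two coordinates; then $\Phi\bA^4\Phi^*-(\Phi\bA\Phi^*)^4=\left(\begin{smallmatrix}1&8\\8&27\end{smallmatrix}\right)$ has determinant $-37$, so the compression inequality itself fails and no argument from the stated hypotheses can reach it. Your parenthetical alternative of invoking Hansen--Pedersen directly runs into the same wall, since that theorem \emph{characterises} operator convexity by exactly this isometry inequality. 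Finally, monotonicity is not doing the work you describe: the spectral functional calculus is well-defined for any Borel $f$, so monotonicity cannot be what is bridging scalar convexity to the operator inequality.
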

    
    \begin{proof}
    As stated above, this is a direct consequence of results in \cite{AMS07}. In particular, the map
    $\phi:\R^{N\times N}\to\R^{n\times n}$ defined by
    \[
        \phi(\bA) := \Phi\bA\Phi^*
    \]
    is positive (order--preserving) and unital (identity--preserving), and so conclusion (1) of Proposition~5.2 in \cite{AMS07} implies the result. The latter conclusion is a simple consequence of applying the order-preserving mapping $\psi(\bB) := \bSig\bB\bSig$ to both sides of the inequality.
    \end{proof}
    
    \subsection{Global pointwise-in-time error bounds}
    
    We now employ the result above to prove the following global bounds on the difference between the autocovariance matrices. Full proofs of the global bounds which follow are postponed to Appendix~\ref{sec:NDErrorProof}.
    
    \begin{theorem}\label{th:NDError}
    For all {$\tau>0$}, we have that the difference between the true {equilibrium} autocovariance and the equilibrium autocovariance for Approach 1 can be bounded above and below using the L\"owner partial order as follows:
    \[
        0\leq \bR(\tau)-\bR_1(\tau)\leq \tfrac12\beta^{-1}\tau^2(\bA_0-\bB)=\tfrac12\beta^{-1}\tau^2\balpha\bA_1^{-1}\balpha^*.
    \]
    Likewise, for all $\tau>0$, the difference between the true equilibrium autocovariance and the autocovariance
    of Approach 2 can be bounded above and below as follows:
    \[
        \beta^{-1}\tau(\bC-\bI)\leq \bR(\tau)-\bR_2(\tau)\leq \tfrac12\beta^{-1}\tau^2(\bA_0-\bC\bB\bC)+\beta^{-1}\tau(\bC-\bI).
    \]
    As a particular consequence of the latter bound, there exists $\tau^*>0$ such that
    \[
    \beta^{-1}\tau(\bC-\bI)\leq \bR(\tau)-\bR_2(\tau)\leq\beta^{-1}\tau(\bI-\bC)
    \]
    for all $\tau\in[0,\tau^*]$.
    
    \end{theorem}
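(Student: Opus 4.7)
The plan is to combine the generalised Jensen inequality of Theorem~\ref{th:Jensen} with a matrix Taylor expansion in integral-remainder form. The key specialisation of Jensen I will use takes the matrix $\bM:=\Phi\bA^{-1/2}$ in place of the $\bPhi$ in Theorem~\ref{th:Jensen}: the Schur-complement identity \eqref{eq:SchurInverse} gives $\bM\bM^*=\Phi\bA^{-1}\Phi^*=\bB^{-1}$, whence $\bSig=\bB^{-1/2}$, while $\bM\bA\bM^*=\Phi\Phi^*=\bI$ so that $\bSig^{-1}\bM\bA\bM^*\bSig^{-1}=\bB$. For any monotone convex $f:(0,+\infty)\to\R$, Theorem~\ref{th:Jensen} therefore specialises to the master inequality
\[
\bB^{-1/2}f(\bB)\bB^{-1/2}\leq \Phi\bA^{-1}f(\bA)\Phi^*,
\]
which will be the workhorse of the whole argument; for each bound in the statement I will pick an appropriate $f$.

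For Approach~1, I will obtain the lower bound $\bR(\tau)\geq\bR_1(\tau)$ immediately from the master inequality applied to $f(x)=\e^{-\tau x}$ (monotone decreasing and convex). For the upper bound, I set $g(\tau):=\beta\bigl(\bR(\tau)-\bR_1(\tau)\bigr)$; the Schur identity together with $\Phi\Phi^*=\bI$ yields $g(0)=0$ and $g'(0)=0$, so the integral-remainder Taylor identity gives $g(\tau)=\int_0^\tau(\tau-s)g''(s)\,\dd s$. I will then show the pointwise estimate $g''(s)=\Phi\bA\e^{-s\bA}\Phi^*-\bB\e^{-s\bB}\leq \bA_0-\bB$, which rearranges to $\bB(\bI-\e^{-s\bB})\leq \Phi\bA(\bI-\e^{-s\bA})\Phi^*$; this is precisely the master inequality applied to $f(x)=x^2(1-\e^{-sx})$. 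A calculus check using the substitution $u=sx$ confirms this $f$ is monotone increasing and convex on $(0,\infty)$. Integrating twice then delivers the $\tfrac12\tau^2(\bA_0-\bB)$ upper bound.

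For Approach~2, I set $h(\tau):=\beta\bigl(\bR(\tau)-\bR_2(\tau)\bigr)$ and compute $h(0)=0$, $h'(0)=\bC-\bI$ and $h''(0)=\bA_0-\bC\bB\bC$. The Taylor identity $h(\tau)-\tau(\bC-\bI)=\int_0^\tau(\tau-s)h''(s)\,\dd s$ converts the two-sided bound into $0\leq\int_0^\tau(\tau-s)h''(s)\,\dd s\leq\tfrac12\tau^2(\bA_0-\bC\bB\bC)$. To handle the non-symmetric products $\bB\bC$ and $\bC\bB\bC$, I will introduce the symmetric matrix $\bD:=\bC^{1/2}\bB\bC^{1/2}$, for which $\bC\bB\bC=\bC^{1/2}\bD\bC^{1/2}$ and $\e^{-s\bB\bC}=\bC^{-1/2}\e^{-s\bD}\bC^{1/2}$. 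The upper bound then reduces to the pointwise comparison $\bC^{1/2}\bD(\bI-\e^{-s\bD})\bC^{1/2}\leq \Phi\bA(\bI-\e^{-s\bA})\Phi^*$, which I expect to follow from a variant of the master inequality adapted to the conjugation by $\bC^{1/2}$. The lower bound is more delicate since $h''$ need not be sign-definite---simple $2\times 2$ examples confirm that it can change sign; differentiating once, the lower bound reduces to the inequality $\Phi(\bI-\e^{-\tau\bA})\Phi^*\geq \bC(\bI-\e^{-\tau\bB\bC})$ for all $\tau\geq 0$, which after the $\bD$-symmetrisation I intend to establish by a carefully chosen application of the master inequality.

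The short-time statement will follow elementarily from the long-time upper bound: the scalar-in-$\tau$ inequality $\tfrac12\tau^2(\bA_0-\bC\bB\bC)+\tau(\bC-\bI)\leq\tau(\bI-\bC)$ rearranges in the L\"owner order to $\tau(\bA_0-\bC\bB\bC)\leq 4(\bI-\bC)$, and in the non-degenerate case $\balpha\neq\bzer$ (so that $\bI-\bC>0$) a positive threshold $\tau^*>0$ can be read off explicitly from the extremal eigenvalues of $\bI-\bC$ and $\bA_0-\bC\bB\bC$. The main obstacle I foresee is the Approach~2 lower bound: because $h''(s)$ is not pointwise non-negative, the Jensen application must be performed at the level of the integrated quantity rather than pointwise, and identifying the right monotone convex test function for this purpose is the key technical hurdle in the argument.
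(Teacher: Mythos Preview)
Your Approach~1 argument is correct and complete, but the paper takes a more direct route: it never writes a Taylor remainder, instead applying Theorem~\ref{th:Jensen} once per bound with test functions that encode the whole inequality, namely $f(x)=x\e^{-\tau/x}$ for the lower bound and $g(x)=\tfrac12\tau^2/x-x\e^{-\tau/x}$ for the upper, both acting on $\bA^{-1}$ through the unital map $\Phi$. Your master inequality via $\bM=\Phi\bA^{-1/2}$ together with the test functions $\e^{-\tau x}$ and $x^2(1-\e^{-sx})$ also works, but you pay for the detour through the integral remainder with a non-trivial convexity verification for $x\mapsto x^2(1-\e^{-sx})$. The short-time consequence is handled essentially the same way in both arguments.

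For Approach~2 your proposal has a genuine gap, exactly where you flag it. The pointwise inequality for $h''$ (upper bound) and the first-derivative inequality $\Phi(\bI-\e^{-\tau\bA})\Phi^*\geq \bC(\bI-\e^{-\tau\bB\bC})$ (lower bound) do not follow from your master inequality as written: that inequality is built around conjugation by $\bB^{-1/2}$, whereas what you need involves conjugation by $\bC^{1/2}$, and no single choice of monotone convex $f$ in your framework bridges the two. The paper sidesteps the whole difficulty by never differentiating in $\tau$. It replaces $\Phi$ by $\bOm:=\bC^{1/2}\Phi$, for which $\bOm\bOm^*=\bC$, applies the non-unital form of Theorem~\ref{th:Jensen} with the \emph{same} $f$ and $g$ as for Approach~1, and then uses one extra elementary step: since $\bC^{-1}=\bI+\balpha\bA_1^{-2}\balpha^*\geq\bI$ we have $\bC\leq\bI$, so conjugation by $\bC^{1/2}$ is order-decreasing on positive matrices, i.e.\ $\bC^{1/2}\bX\bC^{1/2}\leq\bX$ for all $\bX\geq 0$. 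Chaining Jensen through $\bOm$ with this contraction gives, for the lower bound,
\[
\bR_2(\tau)+\tau\bC\;\leq\;\bC^{1/2}\big(\bR(\tau)+\tau\bI\big)\bC^{1/2}\;\leq\;\bR(\tau)+\tau\bI,
\]
and an analogous two-line chain with $g$ yields the upper bound. This is the structural idea you are missing; in particular it removes any need to control the sign of $h''$.
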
\medskip

    \noindent
    While the bounds provided by this result are global, it is clear that they do not accurately reflect the error for large values of $\tau$, since the autocovariance functions all decay exponentially. Nevertheless, performing Taylor expansions of the autocovariance functions at $\tau=0$ demonstrate that these bounds are highly accurate for $\tau$ small; a short calculation demonstrates that, as $\tau\to0$, we have
    \begin{equation}
        \label{eq:short_time}
    \begin{aligned}
    \beta\bR(\tau)
    &= \bB^{-1}-\tau\bI+\tfrac12\tau^2\bA_0+O(\tau^3),\\
    \beta\bR_1(\tau)
    &= \bB^{-1}-\tau\bI+\tfrac12\tau^2\bB+O(\tau^3)={\bR(\tau)+O(\tau^2)},\\
    \beta\bR_2(\tau)
    &= \bB^{-1}-\tau\bC+\tfrac12\tau^2\bC\bB\bC+O(\tau^3)={\bR(\tau)+O(\tau)}.
    \end{aligned}
    \end{equation}
    We see directly that the comparison of these expansions would give us the same results for sufficiently small $\tau$.
    
    We note that the first bounds provided by Theorem~\ref{th:NDError} demonstrate that the autocovariance of Approach 1 is always an underestimate of the true autocovariance. Connecting this result with the discussion of the mean-squared displacement in Section~\ref{sec:MSD}, we see that coarse-graining via Approach 1 will always predict a faster rate of self-diffusion than the full dynamics. Our result therefore agrees with the widely-observed fact that coarse-grained models are often less stiff than their fine-grained counterparts. While this property may be desirable for certain problems where shortening equilibriation times can lead to accelerated mixing \cite{tuckerman1991molecular,vlachos2005review}, for problems where better estimates of dynamical statistics are required (see for instance \cite{ZhenLi2022}), Approach~1 demonstrates a systematic bias.
    
    On the other hand, inspecting the Taylor expansion for Approach~2 provided in \eqref{eq:short_time}, we see that this approach systematically overestimates the true autocovariance for short lag times. This overestimation does not persist: indeed, we will confirm this both analytically and numerically in subsequent sections, where we will show that while Approach 1 provides an excellent approximation to the autocovariance for short lags $\tau$, Approach 2 provides a better global approximation, particularly when considering the long-time tail behaviour.
    
    \subsection{Progressive coarse-graining}
    \label{sec:multiple_reduction}
    In addition to the bounds obtained in Theorem~\ref{th:NDError}, we now argue that the application of Jensen's inequality in our analysis has an interesting consequence for the comparison of models obtained by progressively coarse-graining.

    \begin{corollary}
    \label{cor:coarsening}
        Suppose that $\bX\in\R^{d\times n}$ and $\Phi\in\R^{n\times N}$ with $d<n$ are coarse-graining maps which satisfy
        \[
            \bX\bX^* = \bI\quad\text{and}\quad\Phi\Phi^*=\bI,
        \]
        and let $\bY=\bX\Phi\in\R^{d\times N}$ be their composition. Suppose further that, applying approximation Approach~1, we have that:
        \begin{itemize}
            \item $\bq_t\in\R^N$ solves the full dynamics,
            \item $\bxi_t\in \R^n$ solves the approximate dynamics derived using the map $\Phi$, and 
            \item $\bx_t\in \R^d$ solves the approximate dynamics derived using the map $\bY$.
        \end{itemize}
          Then we have the following estimates relating the equilibrium covariances:
    \begin{align}
\lim_{t\to\infty} &\Big\|\Cov(\bY\bq_t,\bY\bq_{t+\tau})-\Cov(\bX\bxi_t,\bX\bxi_{t+\tau})\Big\|_F \nonumber \\
&\qquad \leq\lim_{t\to\infty}\Big\|\Cov(\bY\bq_t,\bY\bq_{t+\tau})-\Cov(\bx_t,\bx_{t+\tau})\Big\|_F, \label{Upper_bound_Coarest}\\       \lim_{t\to\infty}&\Big\|\Cov(\bX\bxi_t,\bX\bxi_{t+\tau})-\Cov(\bx_t,\bx_{t+\tau})\Big\|_F\nonumber \\
  & \qquad  \leq    \lim_{t\to\infty}\Big\|\Cov(\bY\bq_t,\bY\bq_{t+\tau})-\Cov(\bx_t,\bx_{t+\tau})\Big\|_F,\label{Coarest_vs_intermediate}
  \end{align}
    where $\|\cdot\|_F$ is the Frobenius norm.
    \end{corollary}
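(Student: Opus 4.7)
The plan is to deduce both Frobenius-norm inequalities from a chain of L\"owner-order estimates, combining Theorem~\ref{th:NDError} with a single well-chosen application of the matrix Jensen inequality (Theorem~\ref{th:Jensen}). Write $T := \beta^{-1}\bY\bA^{-1}\e^{-\tau\bA}\bY^*$, $I := \bX\bR_1^{\Phi}(\tau)\bX^*$, and $C := \bR_1^{\bY}(\tau)$ for the three equilibrium autocovariances in the statement, where $\bR_1^\Phi$ and $\bR_1^\bY$ are the Approach~1 autocovariance functions associated with the coarse-graining maps $\Phi$ and $\bY$, built from the effective Schur-complement matrices $\bB_\Phi$ and $\bB_\bY$. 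The key algebraic fact I would use throughout is the identity $\bB_\bY^{-1} = \bY\bA^{-1}\bY^* = \bX\bB_\Phi^{-1}\bX^*$, which follows from $\bY = \bX\Phi$ together with \eqref{eq:SchurInverse}.

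First, I would apply Theorem~\ref{th:NDError} separately to the maps $\Phi$ and $\bY$ to obtain $\bR^{\Phi}(\tau) \geq \bR_1^{\Phi}(\tau)$ and $\bR^{\bY}(\tau) \geq \bR_1^{\bY}(\tau)$ in the L\"owner sense. Sandwiching the first inequality by $\bX$ and $\bX^*$ (which preserves the L\"owner cone since $\bX\bM\bX^*\geq 0$ whenever $\bM\geq 0$) produces $T \geq I$, while the second inequality immediately gives $T \geq C$.

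The crux is then to show $I \geq C$ via Theorem~\ref{th:Jensen}. The obvious guess $h(x) = x^{-1}\e^{-\tau x}$ does not fit, because $\bB_\bY \neq \bX\bB_\Phi\bX^*$; the correct relation is the inverse one. This motivates the reparametrisation $\widetilde f(y) := y\,\e^{-\tau/y}$ on $(0,\infty)$, which one checks is monotone increasing (with $\widetilde f'(y) = \e^{-\tau/y}(1+\tau/y)$) and convex (with $\widetilde f''(y) = \tau^2 y^{-3}\e^{-\tau/y} > 0$). Applying Theorem~\ref{th:Jensen} with this $\widetilde f$, the map $\bX$, and positive-definite argument $\bB_\Phi^{-1}$ gives $\widetilde f(\bX\bB_\Phi^{-1}\bX^*) \leq \bX\widetilde f(\bB_\Phi^{-1})\bX^*$, which via the identity $\bX\bB_\Phi^{-1}\bX^* = \bB_\bY^{-1}$ reads precisely $\beta C \leq \beta I$.

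Combining gives the two L\"owner chains $0 \leq T - I \leq T - C$ and $0 \leq I - C \leq T - C$, all terms being positive semidefinite. The final step is to pass from L\"owner order to Frobenius norm: by Weyl's monotonicity theorem, $0 \leq \bM_1 \leq \bM_2$ for Hermitian matrices implies the ordered eigenvalues satisfy $0 \leq \lambda_i(\bM_1) \leq \lambda_i(\bM_2)$, and hence $\|\bM_1\|_F^2 = \sum_i \lambda_i(\bM_1)^2 \leq \sum_i \lambda_i(\bM_2)^2 = \|\bM_2\|_F^2$. Applied to each of the two chains, this delivers \eqref{Upper_bound_Coarest} and \eqref{Coarest_vs_intermediate} respectively. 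The main conceptual obstacle in this argument is isolating the correct convex function: once the reparametrisation from $h(x) = x^{-1}\e^{-\tau x}$ to $\widetilde f(y) = y\,\e^{-\tau/y}$ is found, so that the argument of Jensen's inequality matches the Schur-complement identity $\bB_\bY^{-1} = \bX\bB_\Phi^{-1}\bX^*$, everything else is a routine matter of collating the three inequalities and invoking eigenvalue monotonicity.
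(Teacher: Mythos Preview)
Your proof is correct and matches the paper's approach essentially line for line: both establish the L\"owner chain $T \geq I \geq C \geq 0$ via the operator-convex function $f(y)=y\e^{-\tau/y}$ (the same one already underlying Theorem~\ref{th:NDError}) and then pass to the Frobenius norm by eigenvalue monotonicity. Your separate derivation of $T \geq C$ is redundant, since it follows from $T \geq I \geq C$; and the ``reparametrisation'' you highlight is not a new idea here but precisely the function used in the proof of Theorem~\ref{th:NDError} that you invoke.
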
\medskip
    
    \noindent
    We may interpret these estimates as saying that the covariance error necessarily increases with the degree of coarse-graining, and differences in autocovariance between coarse and intermediately coarse-grained models provide guaranteed lower bounds on the error between the finest and coarsest models. A full proof of the result is given in Section~\ref{sec:coarseningproof}, and employs Theorem~\ref{th:Jensen}.

    As a concrete example to illustrate the application of the above result, let us consider taking $\Phi$ to be a map onto the first $n$ variables, and $\bX$ a further reduction onto the first $d$ variables with $d<n$, so that
    \[
    \Phi = \left(\begin{array}{cc}
         \bI_n & \bzer_{n\times (N-n)}
    \end{array}
    \right)\quad\text{and}\quad \bX = \left(\begin{array}{cc}
        \bI_d & \bzer_{d\times(n-d)} 
    \end{array}\right).
    \]
    Here, $\bI_n$ and $\bI_d$ are identity submatrices, and $\bzer_{n\times (N-n)}$ and $\bzer_{d\times(n-d)}$ are submatrices of zeros.
    It follows that the composition of these maps, $\bY:=\bX\Phi\in\bR^{d\times N}$ is a `coarser' coarse-graining map satisfying $\bY\bY^* = \bX\bPhi\bPhi^*\bX^* = \bI$. In the particular example case given above, we have that
    \[
    \bY = \left(\begin{array}{cc}
         \bI_d & \bzer_{d\times (N-d)}
    \end{array}
    \right).
    \]
    In this instance, this result compares the autocovariances of the first $d$ variables under the approximate coarse-graining approaches laid out in Section~\ref{sec:derivation}.

    \begin{remark}
    The first estimate in the statement of Corollary~\ref{cor:coarsening} now says that if we consider the difference between the autocovariance of the first $d$ variables in the full model and the first $d$ variables in the $n$-dimensional coarse-grained model with $d<n$, then the error will be smaller than that committed in the $d$-dimensional coarse-grained model.

    The second estimate says that the error in the autocovariance between the first $d$ variables in the $n$-dimensional model and the autocovariance of the $d$-dimensional model provides a lower bound on the autocovariance error to the full model. As such, comparing the autocovariance of a partially coarse-grained model with a further coarse-grained model can be used as an practical assessment of its potential inaccuracy.
    
    We note that we do not prove a similar analytic result for Approach~2. When the time-scale separation (i.e., the spectral gap) within the system is large, the numerical results in Section~6.2 suggest that Approach~2 also possesses this property; but when the spectral gap is not notable, as seen in some examples of Section~6.3, the conclusion of Corollary~\ref{cor:coarsening} will fail for Approach~2.
    
    As a consequence, we will study a two-dimensional system in detail in the next section as the 2D error estimates provide error bounds and insights for higher-dimensional systems.
    \end{remark}
    
    \section{Coarse-graining of two-dimensional systems}
    \label{sec:2d_error}

    We next perform a detailed analytical comparison of both approaches for two-dimensional systems. In particular, throughout this section we will assume $N=2$ and $n=m=1$, so that the coarse graining map reduces a two-dimensional system to a one-dimensional one. In this case, we obtain improved versions of the error estimates established in Theorem~\ref{th:NDError} which hold for longer time-scales. In doing so, we elucidate various features of coarse-graining which we expect to still be present in higher-dimensional cases, which are studied numerically in Section~\ref{sec:num}.
    
    In the two-dimensional case, we may describe all possible coarse-grained systems of the form \eqref{eq:SDE_block} using only two parameters $(\lambda,\, \theta)$.
    Choosing coordinates such that $\bA$ is diagonal, and rescaling such that the lowest eigenvalue of $\bA$ is $1$, we may assume that
    \begin{equation}\label{drift2D}
    \bA = \left(\begin{array}{cc}
        1 & 0 \\
        0 & \lambda
    \end{array}\right),
    \end{equation}
    where $1\leq\lambda$. In general, any coarse-graining map satisfying the assumptions outlined in Section~\ref{sec:CGassumptions} can be expressed in terms of a single angle $\theta\in(-\frac\pi2,\frac\pi2)$ between the coarse-grained variable and the eigenspace of $\bA$ corresponding to the eigenvalue $1$, so that
    \begin{equation}\label{proj2D}
    \Phi := (\cos\theta\;\sin\theta)\quad\text{and}\quad\Psi = (-\sin\theta\;\cos\theta).
    \end{equation}
    In this context, the parameter $\lambda$ therefore provides us with a way to measure the timescale separation in the system, while the angle parameter $\theta$ provides with a way to explore the alignment of the CG variable with the eigenspace corresponding to the slowest timescale. In particular, we will be interested in comparing the dynamical properties of our CG approaches in the asymptotic regime where $\lambda\gg 1$ and $|\theta|\ll 1$, so that timescale separation is large.
    
\subsection{Measuring autocovariance error}
    In order to quantify the accuracy of the approximate approaches proposed in Section~\ref{sec:ApproximateEqns}, we define the following measures of error. To measure the error between different matrix-valued autocovariance functions, we consider both the
    \begin{align}
        \text{Absolute error at time $\tau$}&:=\|\bR_i(\tau)-\bR(\tau)\|_F,\label{Abs_error}\\[2mm]
        \text{Relative error at time $\tau$}&:=\frac{\|\bR_i(\tau)-\bR(\tau)\|_F}{\|\bR(\tau)\|_F},\label{Rel_Abs_error}
    \end{align}
    where $\bR_i$ is the equilibrium autocovariance for Approach~$i$ with $i=1$ or $2$. To measure the accumulation of error over the time interval $(0,\tau)$, we also consider the following measures:
    \begin{align}
            \text{Absolute $L^1(0,\tau)$ mean error}&:=\frac{1}{\tau}\int_0^\tau\big\|\bR_i(t)-\bR(t)\big\|_{F}\,\dd t, \label{L1_error} \\[2mm]
            \text{Relative $L^1(0,\tau)$ mean error}&:=\frac{\int_0^\tau\big\|\bR_i(t)-\bR(t)\big\|_{F}\,\dd t}{\int_0^\tau\big\|\bR(t)\big\|_{F}\,\dd t}.\label{Rel_L1_error}
               \end{align}
           The choice to normalise the absolute $L^1$ error by dividing by the lag time $\tau$ has been made to give the error rates consistent units.
     
    \subsection{Autocovariance error for large time-scale separation}
    Figure~\ref{fig:ACF_LagTime} illustrates the absolute and $L^1(0,\tau)$ mean error in the autocovariance measured in our 2D setting where $\lambda=2$ and $\theta$ varies. We note that the errors for Approach~1 are smaller than those of Approach~2 when the lag time $\tau$ is small, and the opposite is true when $\tau$ becomes large. Further, the errors increase as the coarse-graining map deviates more from the projection onto the eigenspace for the minimal eigenvalue, i.e. when $\theta$ becomes larger.
    
    \begin{figure}
           \centering
          \subfigure[Absolute ACF error]{ \includegraphics[width = 0.45\textwidth]{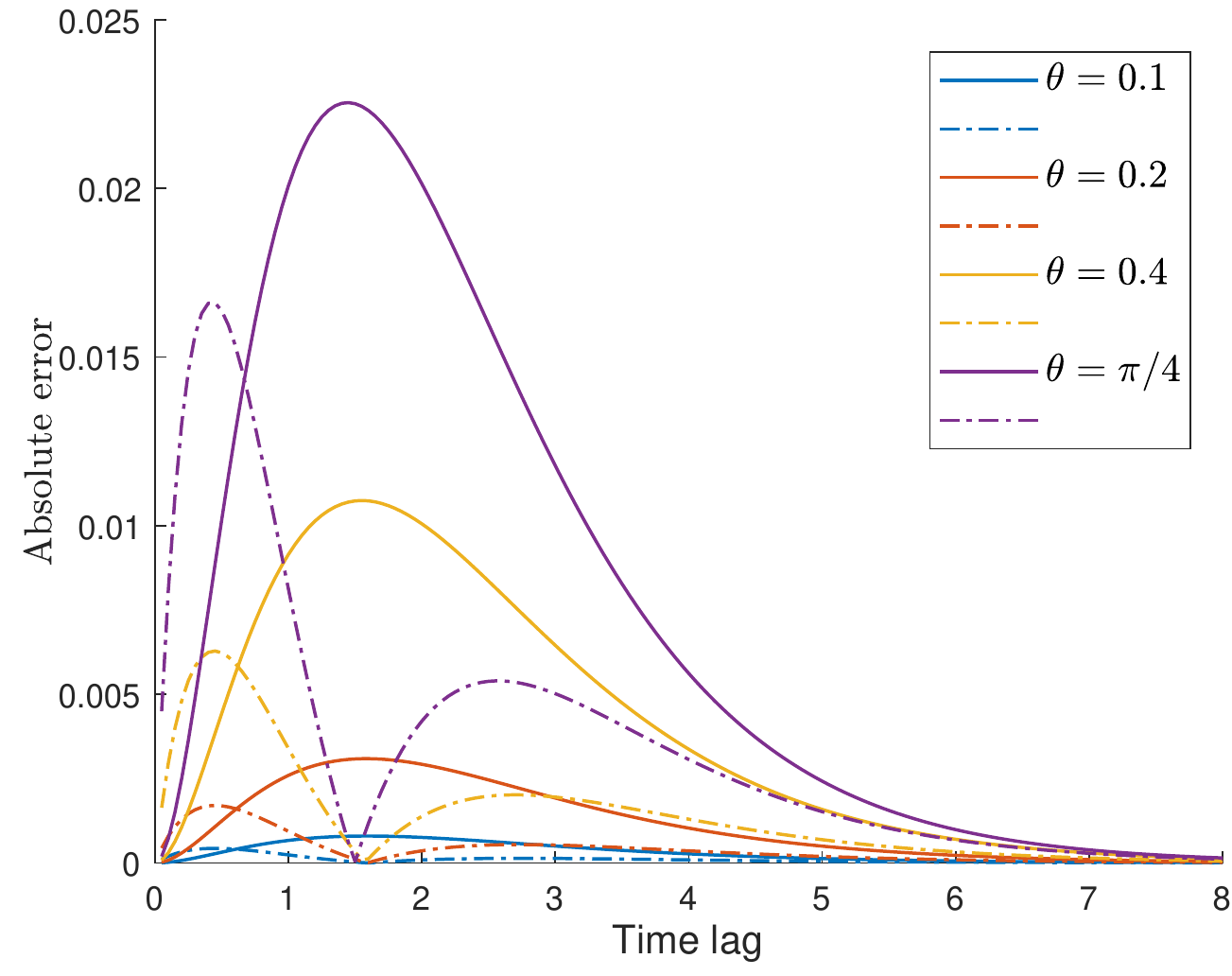}}
           \subfigure[$L^1$ ACF mean error]{ \includegraphics[width = 0.45\textwidth]{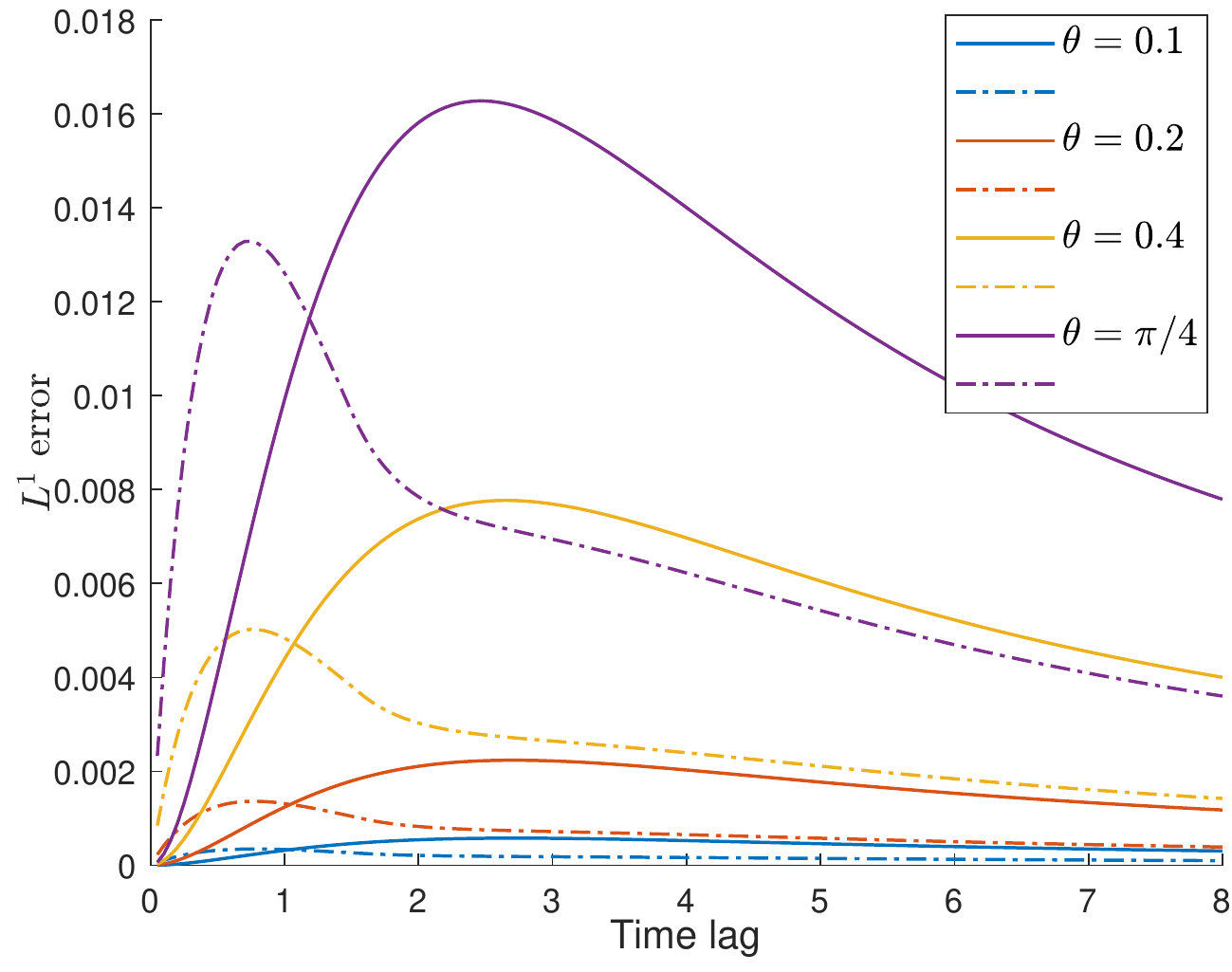}}
           \caption{The absolute error and the $L^{1}$ time-averaged error in the equilibrium autocovariance functions for Approach~1 ({\bf solid line}) and for Approach~2 ({\bf dash-dot line}), plotted over different time lags $\tau=|t-s|$. In this case, $\lambda = 2$ and $\theta$ is varying.
        }\label{fig:ACF_LagTime}
    \end{figure}  

    To explore this observation further,
    the following theorem provides error estimates in the case of a fixed $\tau$ as $\lambda\to+\infty$, which is the regime in which the timescale separation in the system grows very large.

    \begin{theorem}\label{th:2Derror-longtime}
        In the two-dimensional setting described above, as $\lambda\to+\infty$, the error between the true equilibrium covariance and the equilibrium covariance for Approach 1 satisfies
        \begin{equation}\label{App1_GlobalErr}
        \begin{split}
        |R(\tau)-R_1(\tau)|= \beta^{-1}(\e^{-\tau}-\e^{-\tau\sec^2\theta})\cos^2\theta+O(\beta^{-1}\lambda^{-1})
        \\[2mm]
        \text{and}
        \quad
        \frac{|R(\tau)-R_1(\tau)|}{|R(\tau)|}=1-\e^{-\tau\tan^2\theta}+O(\lambda^{-1})
        \end{split}
        \end{equation}
        for all $\tau\gg \lambda^{-1}$.
        In particular, at $\tau=1$, we have        \begin{equation}\label{App1_RelErr_Lag1}
        \frac{|R(1)-R_1(1)|}{|R(1)|}=1-\e^{-\tan^2\theta}+O(\lambda^{-1}).
        \end{equation}
        In the same regime, the error between the true equilibrium covariance and the equilibrium covariance for Approach~2 satisfies
        \begin{equation}\label{App2_GlobalErr}
        \begin{gathered}
|R(\tau)-R_2(\tau)|= \beta^{-1}\frac{\sin^2\theta}{\lambda}\big|\e^{-\lambda \tau}-\e^{-\tau }(1-\tau)\big|+O(\beta^{-1}\lambda^{-2}),
        \\[2mm]
        \text{and}
        \quad
        \frac{|R(\tau)-R_2(\tau)|}{|R(\tau)|}=\min\left\{1,\, \frac{|\tau-1|}{\lambda}\tan^2\theta + O(\lambda^{-2})\right\}.
        \end{gathered}
        \end{equation}
        Moreover, at $\tau=1$, we have        \begin{equation}\label{App2_RelErr_Lag1}
         \frac{|R(1)-R_2(1)|}{|R(1)|}= \frac{\tan^2\theta|1-\frac12\tan^2\theta|}{\lambda^2}+O(\lambda^{-3}).
        \end{equation}
    \end{theorem}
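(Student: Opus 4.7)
The plan is to exploit the fact that in the two-dimensional setting all relevant block-matrices reduce to scalars, so that closed-form expressions for $\bB$, $\bC$, and $\bB\bC$ can be obtained and then expanded asymptotically as $\lambda\to\infty$. Substituting $\bA = \mathrm{diag}(1,\lambda)$, $\Phi = (\cos\theta,\sin\theta)$ and $\Psi = (-\sin\theta,\cos\theta)$ into \eqref{eq:SubmatrixDefns} gives $\bA_0 = \cos^2\theta + \lambda\sin^2\theta$, $\balpha = (\lambda-1)\sin\theta\cos\theta$, and $\bA_1 = \sin^2\theta + \lambda\cos^2\theta$. The Schur identity \eqref{eq:SchurInverse} immediately yields $\bB^{-1} = \cos^2\theta + \sin^2\theta/\lambda$, which is far simpler than expanding the definition of $\bB$ directly. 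A short algebraic simplification then shows that $\bA_1^2 + \balpha^2 = \sin^2\theta + \lambda^2\cos^2\theta$, so
\begin{equation*}
    \bC = \frac{(\sin^2\theta + \lambda\cos^2\theta)^2}{\sin^2\theta + \lambda^2\cos^2\theta}\qquad\text{and}\qquad \bB\bC = \frac{\lambda(\sin^2\theta+\lambda\cos^2\theta)}{\sin^2\theta + \lambda^2\cos^2\theta}.
\end{equation*}
Writing $R,R_1,R_2$ in this basis yields $R(\tau) = \beta^{-1}(\cos^2\theta\, e^{-\tau} + (\sin^2\theta/\lambda)\, e^{-\lambda\tau})$ and $R_i(\tau) = \beta^{-1}\bB^{-1}e^{-\tau\omega_i}$, with $\omega_1 = \bB$ and $\omega_2 = \bB\bC$.

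For Approach~1, expanding the closed form of $\bB$ gives $\bB = \sec^2\theta + O(\lambda^{-1})$ and hence $R_1(\tau) = \beta^{-1}\cos^2\theta\, e^{-\tau\sec^2\theta} + O(\beta^{-1}\lambda^{-1})$. Under the hypothesis $\tau\gg\lambda^{-1}$, the term $(\sin^2\theta/\lambda)\, e^{-\lambda\tau}$ in $R(\tau)$ is absorbed into the $O(\lambda^{-1})$ error, yielding \eqref{App1_GlobalErr}. The relative error then follows by dividing by $R(\tau)\approx\beta^{-1}\cos^2\theta\, e^{-\tau}$ and simplifying $e^{-\tau\sec^2\theta}/e^{-\tau} = e^{-\tau\tan^2\theta}$.

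For Approach~2, the central computation is the expansion $\bB\bC = 1 + \tan^2\theta/\lambda - \tan^2\theta/\lambda^2 + O(\lambda^{-3})$, which follows directly from the closed form of $\bB\bC$. Multiplying $\bB^{-1} = \cos^2\theta + \sin^2\theta/\lambda$ by the Taylor expansion of $e^{-\tau\bB\bC}$ to first order in $1/\lambda$, and exploiting the identity $\cos^2\theta\cdot\tan^2\theta = \sin^2\theta$ to cancel leading cross-terms, gives $R_2(\tau)/\beta^{-1} = \cos^2\theta\, e^{-\tau} + (\sin^2\theta(1-\tau)/\lambda)\, e^{-\tau} + O(\lambda^{-2})$. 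Subtracting from $R(\tau)$ yields \eqref{App2_GlobalErr}, and dividing through by $R(\tau)$ produces the relative error $|\tau - 1|\tan^2\theta/\lambda + O(\lambda^{-2})$; the $\min\{1,\cdot\}$ in the statement simply reflects that any relative error is trivially bounded by $1$.

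The main obstacle occurs at $\tau = 1$, where the $O(\lambda^{-1})$ contribution for Approach~2 vanishes and the relative error is therefore controlled by the $O(\lambda^{-2})$ coefficient. Extracting this coefficient requires pushing the expansion of $e^{-\tau\bB\bC}$ to second order, carefully collecting the three contributions at $\lambda^{-2}$: the correction $-\tan^2\theta/\lambda^2$ inside $\bB\bC$, the quadratic term $\tfrac12(\tau\tan^2\theta/\lambda)^2$ from the exponential series, and the cross-term between the $O(\lambda^{-1})$ part of $\bB^{-1}$ and the $O(\lambda^{-1})$ part of the exponential. After simplification using $\cos^2\theta\tan^4\theta = \sin^4\theta/\cos^2\theta$, these collapse at $\tau = 1$ to a single coefficient $\sin^2\theta(1 - \tfrac12\tan^2\theta)$, which upon division by $\cos^2\theta\, e^{-1}$ yields \eqref{App2_RelErr_Lag1}.
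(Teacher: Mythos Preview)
Your proposal is correct and follows essentially the same route as the paper's proof: compute the scalars $B$, $C$, $BC$ explicitly in the two-dimensional setting, expand each in powers of $\lambda^{-1}$, and read off the absolute and relative errors by subtracting and dividing the resulting asymptotic series, pushing to second order at $\tau=1$ for Approach~2. The only minor slip is your remark that ``any relative error is trivially bounded by $1$,'' which is not true in general (since $R_2>R$ is possible), though the paper itself does not justify the $\min\{1,\cdot\}$ clipping either.
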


    \noindent
    A proof of this result is given in Appendix~\ref{app:proof-2Derror-longtime}. The main idea is to compute the autocovariance functions explicitly in this case, and perform asymptotic expansions of the resulting expressions.

    We note a series of features of these results. First, all of the error expressions are exactly zero when $\theta=0$. This reflects the fact that in this case, we have selected CG variable which is perfectly aligned with the eigenspace corresponding to the slowest time-scale, diagonalising the system, as was discussed in Section~\ref{sec:equilibrium}. Second, when $\theta\to\pm\frac{\pi}{2}$, the relative error grows large for both approaches. This corresponds to the selecting a CG variable which is very poorly aligned with the direction which exhibits the slowest time-scale of evolution in the system.
    Third, we note that for fixed $\tau$ the relative error for Approach~1 saturates, while it decays rapidly irrespective of $\theta$ in Approach~2. This demonstrates the fact that while Approach~1 provides more accurate covariance behaviour for very short time-lags $\tau$, at larger time-lags, 
    Approach 2 behaves better. This provides a confirmation of the pattern observed in Figure~\ref{fig:ACF_LagTime}, where Approach~2 provides greater accuracy than Approach~1 at large time lags.

Considering the asymptotic expansions of relative autocovariance errors at $\tau=1$ in terms of spectral gap $(\lambda-1)$, \eqref{App1_RelErr_Lag1} suggests that the relative error when using Approach~1 saturates as $(\lambda-1)\to \infty$, while \eqref{App2_RelErr_Lag1} suggests that the relative error when using Approach~2 decays with rate $(\lambda-1)^{-2}$.  In practice, the numerical experiments we perform in Figure~\ref{fig:MaxRelErrRate_vs_lambda} agree with the analytical predictions for various $\theta$. 
 This demonstrates that Approach 1, while accurately capturing static equilibrium statistics, poorly captures dynamical equilibrium statistics in cases where there is large scale separation. This is notable since the case of large scale-separation is usually viewed as the regime in which this CG approach works best; see for example \cite{Legoll2012}.

 The results of Theorem~\ref{th:2Derror-longtime} are only valid at lag times which are long relative to $\lambda^{-1}$. For shorter lag times, we have the following result:
 
    \begin{theorem}\label{th:2Derror-shorttime}
        For $\tau\ll \lambda^{-1}\ll 1$, the absolute and relative autocovariance error for Approach~1 behave asymptotically as follows:
        \begin{gather}
        |R_1(\tau)-R(\tau)| = \tfrac12\beta^{-1}\tau^2(\lambda-1)\sin^2\theta+O(\tau^2)+O(\lambda^3\tau^3)
        \label{ACF_App1_short}\\[2mm]
        \text{and}\quad
        \frac{|R_1(\tau)-R(\tau)|}{R(\tau)} = \tfrac12\tau^2(\lambda-1)\tan^2\theta+O(\tau^2)+O(\lambda^3\tau^3).\label{ACF_App2_short}
       \end{gather}
            In the same regime, the absolute and relative autocovariance error for Approach~2 behave asymptotically as follows:
        \begin{gather}
        |R_2(\tau)-R(\tau)|= \beta^{-1}\tau\sin^2\theta+O(\tau\lambda^{-1},\lambda\tau^2)\\[2mm]
        \text{and}\quad
        \frac{|R_2(\tau)-R(\tau)|}{R(\tau)} = \tau \tan^2\theta+ O(\tau\lambda^{-1},\lambda\tau^2).
        \end{gather}
    \end{theorem}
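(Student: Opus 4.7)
The strategy is to combine the short-time Taylor expansions recorded in \eqref{eq:short_time} with the explicit two-dimensional formulas arising from \eqref{drift2D}--\eqref{proj2D}. Direct computation gives $\bA_0 = \cos^2\theta + \lambda\sin^2\theta$, $\bA_1 = \sin^2\theta + \lambda\cos^2\theta$, $\balpha = (\lambda-1)\sin\theta\cos\theta$, and after simplification $\bB = \lambda/\bA_1$ and $\bC = \bA_1^2/(\bA_1^2 + \balpha^2)$. Expanding these as $\lambda \to \infty$ (with $\theta$ bounded away from $\pm\pi/2$) yields the two asymptotics I will need: $\bA_0 - \bB = \balpha^2/\bA_1 = (\lambda-1)\sin^2\theta + O(1)$ and $\bC - 1 = -\sin^2\theta + O(\lambda^{-1})$.

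For Approach~1, \eqref{eq:short_time} gives $\beta(R(\tau) - R_1(\tau)) = \tfrac12\tau^2(\bA_0 - \bB) + O(\tau^3)$. Substituting the asymptotics of $\bA_0 - \bB$ produces the leading term $\tfrac12\beta^{-1}\tau^2(\lambda-1)\sin^2\theta$ together with the stated $O(\tau^2)$ correction. The Taylor remainder is controlled using the explicit third derivatives $R'''(s) = -\beta^{-1}(\cos^2\theta\, e^{-s} + \lambda^2\sin^2\theta\, e^{-\lambda s})$ and $R_1'''(s) = -\beta^{-1}\bB^2 e^{-s\bB}$: in the regime $\lambda\tau \ll 1$ the exponentials are bounded, $|R'''|$ scales at most like $\lambda^2$ while $|R_1'''|$ is $O(1)$, so Taylor's theorem gives a remainder of at most $O(\lambda^3\tau^3)$. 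The relative error follows by dividing by $R(\tau) = \beta^{-1}\cos^2\theta + O(\lambda^{-1}) + O(\tau)$, converting $\sin^2\theta$ into $\tan^2\theta$.

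For Approach~2, \eqref{eq:short_time} yields $\beta(R(\tau) - R_2(\tau)) = \tau(\bC - 1) - \tfrac12\tau^2(\bC\bB\bC - \bA_0) + O(\tau^3)$. The decisive point, in contrast to Approach~1, is that $\bC - 1$ does not vanish as $\lambda\to\infty$; substituting $\bC - 1 = -\sin^2\theta + O(\lambda^{-1})$ gives the leading contribution $\beta^{-1}\tau\sin^2\theta + O(\tau\lambda^{-1})$. The quadratic-in-$\tau$ correction contributes $O(\lambda\tau^2)$ because $\bA_0$ is of order $\lambda\sin^2\theta$ while $\bC\bB\bC$ is bounded as $\lambda\to\infty$. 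Dividing by $R(\tau) \sim \beta^{-1}\cos^2\theta$ produces the stated relative error.

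The main obstacle is the bookkeeping with the two small parameters $\tau$ and $\lambda^{-1}$ simultaneously: the factor $e^{-\lambda\tau}$ in the full autocovariance means that naive remainder bounds for $R$ accrue powers of $\lambda$, and it is precisely the assumption $\tau \ll \lambda^{-1}$ which keeps these under control. One must verify the explicit asymptotic size of each correction term against the leading error so as to match the stated rates, and in particular ensure that in each regime the Taylor remainders for $R$, $R_1$ and $R_2$ are dominated by the leading discrepancy identified above.
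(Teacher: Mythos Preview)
Your proposal is correct and follows essentially the same approach as the paper: Taylor expand each autocovariance about $\tau=0$ using the expansions in \eqref{eq:short_time}, substitute the explicit two-dimensional scalars for $\bA_0$, $\bB$, $\bC$, and track remainders in the regime $\tau\ll\lambda^{-1}$. Your treatment is in fact slightly more explicit than the paper's, since you justify the cubic remainder by bounding the third derivatives of $R$ and $R_1$ directly (the paper simply asserts the $O(\lambda^2\tau^3)$ remainder for $R$); note incidentally that your own bound $|R'''|=O(\lambda^2)$ yields $O(\lambda^2\tau^3)$, which is sharper than the $O(\lambda^3\tau^3)$ you state but still consistent with the theorem.
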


We note that the range of time-lags where these asymptotic results are valid shrinks as $\lambda\to +\infty$. Within this regime however, Approach~1 performs better than Approach~2, but it is questionable whether this insight is of general use, since the ACF behaviour for larger time-lags dictates the system response. Theorem~\ref{th:2Derror-longtime} would therefore seem to be the result of greater relevance for assessing the accuracy of the Approaches in practice.

Finally, we note that although we do not provide analytical expressions for the $L^1(0,\tau)$ mean absolute or relative errors in Theorem~\ref{th:2Derror-longtime}, similar results can be obtained by carefully matching the asymptotic results above, and then integrating the absolute error expressions in time. Since the resulting expressions are rather complex to interpret, we focus instead on a numerical exploration in Section~\ref{sec:num}.

     \subsection{Summary}\label{subsec:summary}
     We now summarise the results of the analysis of two-dimensional systems performed in this section. If we assume that the  spectral gap is large, i.e. $\lambda-1\gg 1$, then the relative error behaves as follows:
     \begin{itemize}
      \item For short lag times $\tau\ll \lambda^{-1}\ll 1$, we have     \begin{equation}\label{ACF_short_summary}
     \begin{split}
     &\text{\bf Approach 1:} \quad 
     \frac{|R_1(\tau)-R(\tau)|}{|R(\tau)|}=
     \tfrac12\tau^2(\lambda-1)\tan^2\theta
     +O(\tau^2,\lambda^3\tau^3),\\
      &\text{\bf Approach 2:} \quad
              \frac{|R_2(\tau)-R(\tau)|}{|R(\tau)|}= \tau\tan^2\theta+O(\tau\lambda^{-1},\lambda \tau^2).
     \end{split}
     \end{equation}
\item For fixed lag time $\tau =1$, we have
\begin{equation}\label{ACF_long_summary}
     \begin{split}
     &\text{\bf Approach 1:} \quad \frac{|R(1)-R_1(1)|}{|R(1)|}=1-\e^{-\tan^2\theta}+O(\lambda^{-1}),\\
  &\text{\bf Approach 2:} \quad 
\frac{|R(1)-R_2(1)|}{|R(1)|}=
\frac{\tan^2\theta|1-\frac12\tan^2\theta|}{\lambda^2}
     \end{split}
     \end{equation}
\end{itemize}     

     These predictions suggest that Approach~2 performs better than Approach~1 when both the spectral gap and deviation from the optimal coarse-graining projection are considered in the simulation.

\section{Numerical results}\label{sec:num}

To complete our investigation of the approximation approaches we have proposed, we consider three numerical examples to validate our analytical results. In turn, we study:
\begin{enumerate}
    \item A two-dimensional system of the type studied analytically in Section~\ref{sec:2d_error}. We compare our analytical predictions with numerical results in Subsection~\ref{subsec:2D_num}.
    \item A ten-dimensional system in which we study the result of progressively coarse-graining a system, which was the subject of Corollary~\ref{cor:coarsening}. These results are given in Subsection~\ref{subsec:multiD_num}.
    \item A forty-dimensional harmonic chain model with three choices of spring stiffnesses, in which we compare the absolute and relative autocovariance errors, \eqref{Abs_error} and \eqref{Rel_Abs_error} for both Approach~1 and 2 for various choices of model parameters. This example is studied in Subsection~\ref{subsec:1Dchain_num}.
\end{enumerate}
In all cases, we find that the new coarse-graining Approach~2 we have proposed more accurately reflects dynamical statistics than Approach~1.
    
    \subsection{Two-dimensional study}\label{subsec:2D_num}
    
    We firstly recall the simple 2D system studied in Section~\ref{sec:2d_error}, given by
    \begin{equation}\label{2D_driftA}
    \bA=\begin{pmatrix}
    1 & 0\\ 0 &     \lambda
    \end{pmatrix}\quad\text{with}\quad\bPhi = \begin{pmatrix}
         \cos\theta & \sin\theta 
    \end{pmatrix}.
    \end{equation}
    We set an initial condition for the full dynamics $(x_1,\, x_2)=(5,\,-4)$, and note that the coarse-grained variable is $\xi=x_1\cos\theta+x_2\sin\theta$.  For the approximate dynamics, the initial conditions are set to be deterministic, with $\xi_0=5\cos\theta-4\sin\theta$ and $\zeta_0=-A_1^{-1}\alpha^*\xi_0$, and we set the inverse temperature $\beta =1$. Simulations were performed using the Euler-Maruyama scheme with total simulation time of $T=60$ and time-step size $\Delta t=5\times10^{-4}$. When computing the sample ACF, we discard the first half of each trajectory when computing the ACF to ensure our numerical results are close to the equilibrium ACFs.
    
    \paragraph{Validation of autocovariance formulae.}
We first set $\lambda=20$, $\theta=0.3$ in \eqref{2D_driftA} and numerically validate the analytical autocovariance functions (ACFs) computed for Approach~1 and Approach~2, as summarized in Table~\ref{tab:summary}. 
    For Approach~1 and Approach~2, the ACF was computed for $5000$ independent sample trajectories $\xi_t$ with time lags $\tau\in(0,\,2]$, and these results were averaged to compare the empirical values with the associating analytical formula. The results are summarized in Figure~\ref{fig:2D_TrajACF}. We see that the sample averages are consistent with our analytical expressions. We also see that over the short time-scale considered, the ACF of Approach~1 consistently underestimates the true ACF whereas the ACF Approach~2 remains closer to that of the full dynamics.
    We also see from this figure that Approach~1 produces smaller errors when $\tau\ll 1$ but deviates more when $\tau$ becomes large, whereas Approach~2 provides a better approximation to the ACF over longer time-scales.
    
    \begin{figure}[htp!]
        \centering
           {   \includegraphics[width=0.5\textwidth]{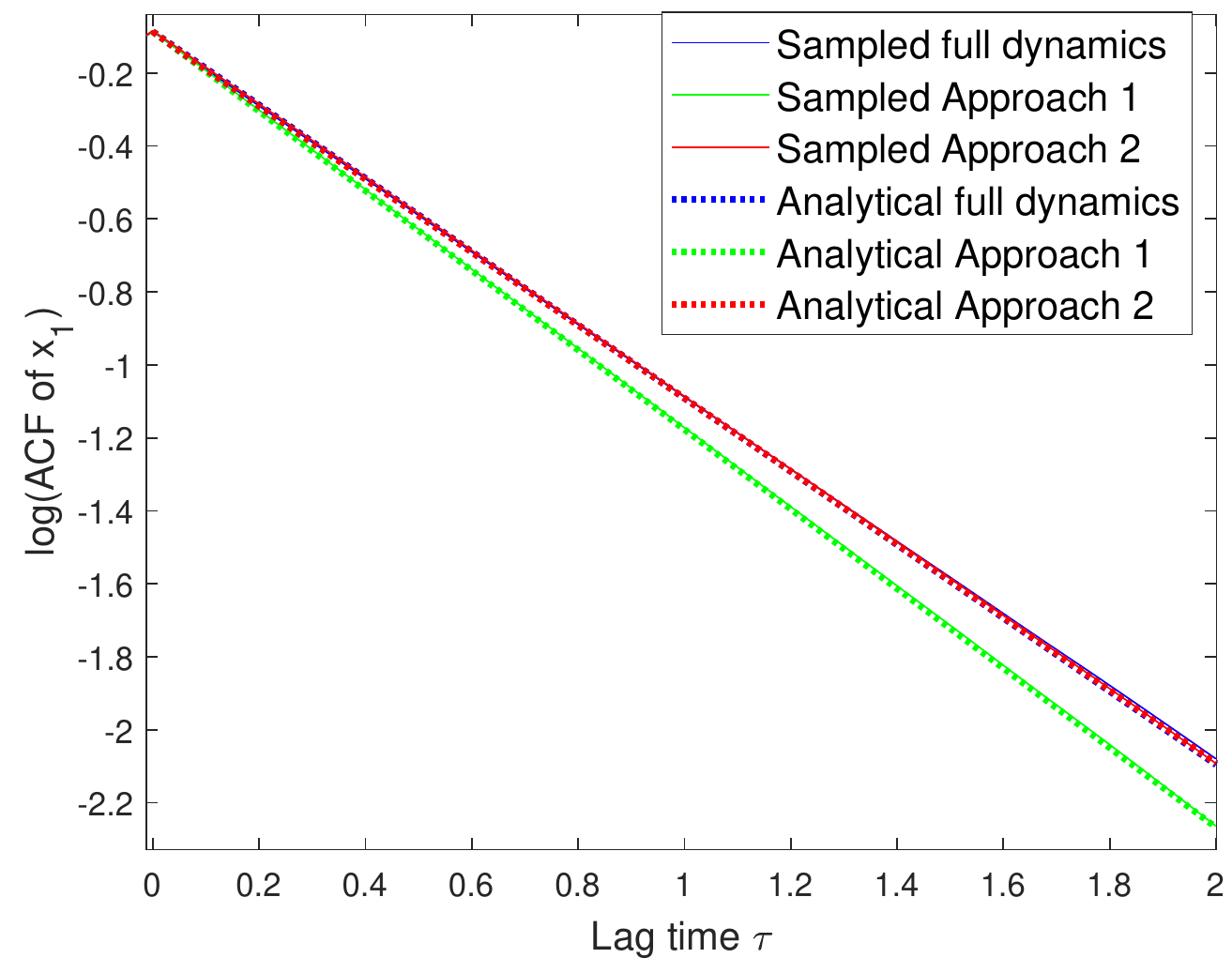}}
        \caption{Comparison of the log of the ACF of the coarse-grained variable derived from the full dynamics, Approach~$1$, and Approach~$2$. In each case, analytical formulae are compared with the results of Monte Carlo sample autocovariances, and are seen to provide a match to within sampling error. Model parameters are $\lambda=20$, $\theta=0.3$ with $\bA$,  $\bPhi$ as defined in \eqref{2D_driftA}. Monte Carlo averages are taken over 5000 samples simulated using the Euler-Maruyama scheme with total simulation time $T=60$ and time-step $\Delta t=5\times10^{-4}$.
        }
        \label{fig:2D_TrajACF}
    \end{figure}
    
\paragraph{Asymptotic error performance with varying spectral gap.}
Next, to study the sharpness the errors predicted by Theorem~\ref{th:2Derror-longtime} and Theorem~\ref{th:2Derror-shorttime}, 
we select $\theta \in\{0.05,\,0.2,\, 0.4,\,\frac{\pi}{4}\}$ in \eqref{2D_driftA} and vary $\lambda\in [1.1,\,1000]$. 

We first study the dependence of relative errors on the spectral gap $(\lambda-1)$ for Approach~1 and Approach~2. Both the regime where $\tau\to 0$ and where $\tau = 1$ were considered, and these results are shown in Figure~\ref{fig:MaxRelErrRate_vs_lambda}.
Notice that the numerical values and asymptotic rates as the spectral gap grows large are very close to the theoretical estimates in \eqref{ACF_short_summary} and \eqref{ACF_long_summary}. 
\begin{figure}[htp!]
    \centering
   {\includegraphics[width = 0.9\textwidth]{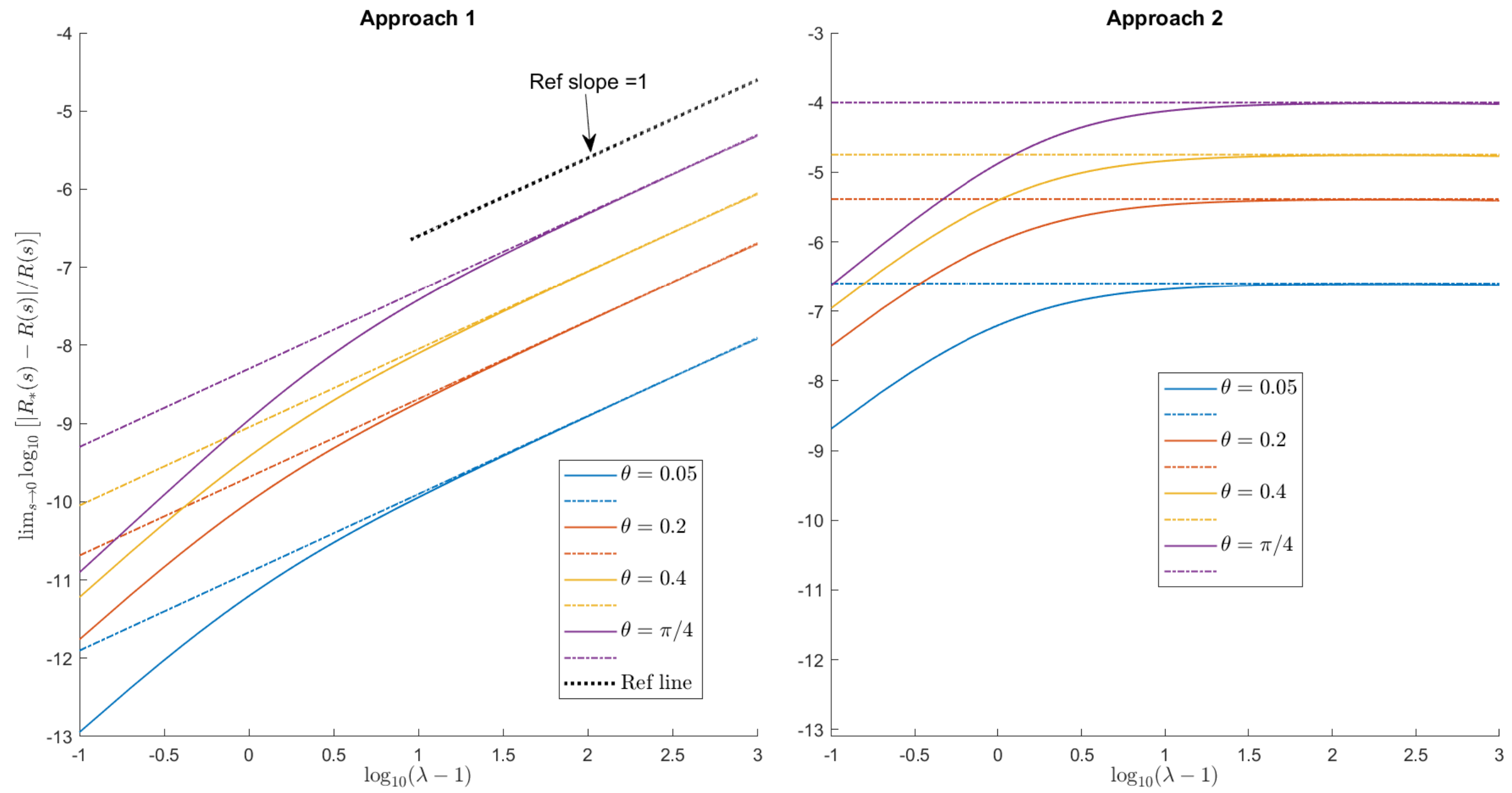}}
  { \includegraphics[width = 0.86\textwidth]{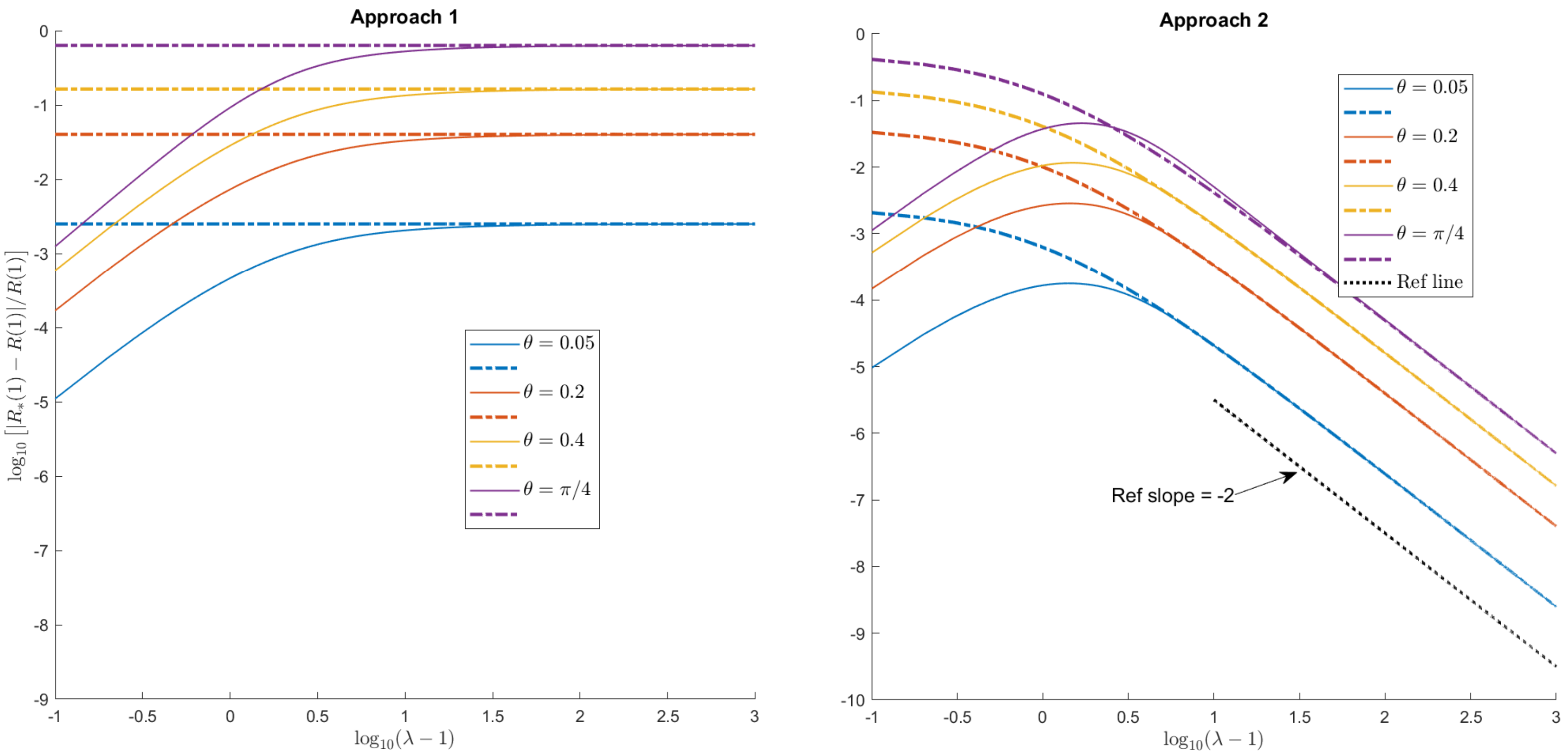}}
    \caption{Relative autocovariance errors for Approach~1 and Approach~2 as $\lambda$ varies in a two-dimensional system. {\bf Top row:} error for short lag times $\tau\to 0^{+}$. {\bf Bottom row:} error for fixed lag time $\tau=1$. In all four plots, solid lines denote the true numerical values and dash-dotted lines denote the theoretical estimates of Approach~1 and Approach~2 in \eqref{ACF_short_summary} and \eqref{ACF_long_summary}. Different colours reflect different choices of $\theta$ made in \eqref{2D_driftA}.
    }
\label{fig:MaxRelErrRate_vs_lambda}
\end{figure}

Secondly, in Figure~\ref{fig:AbsErr_vs_tau} we plot the absolute errors against different time-lags $\tau$ for both approaches and for fixed $\lambda = 10$. We note that the analytical estimates for both short and global time lags, as presented in Theorem~\ref{th:2Derror-longtime} and Theorem~\ref{th:2Derror-shorttime}, provide accurate error bounds for both approaches. In addition, the short time estimate  \eqref{ACF_App1_short} is very close to Approach~1 while the global time estimate \eqref{App2_GlobalErr} is close to Approach 2.
\begin{figure}[htp!]
    \centering
  { \hspace{-7mm}\includegraphics[width = 0.87\textwidth]{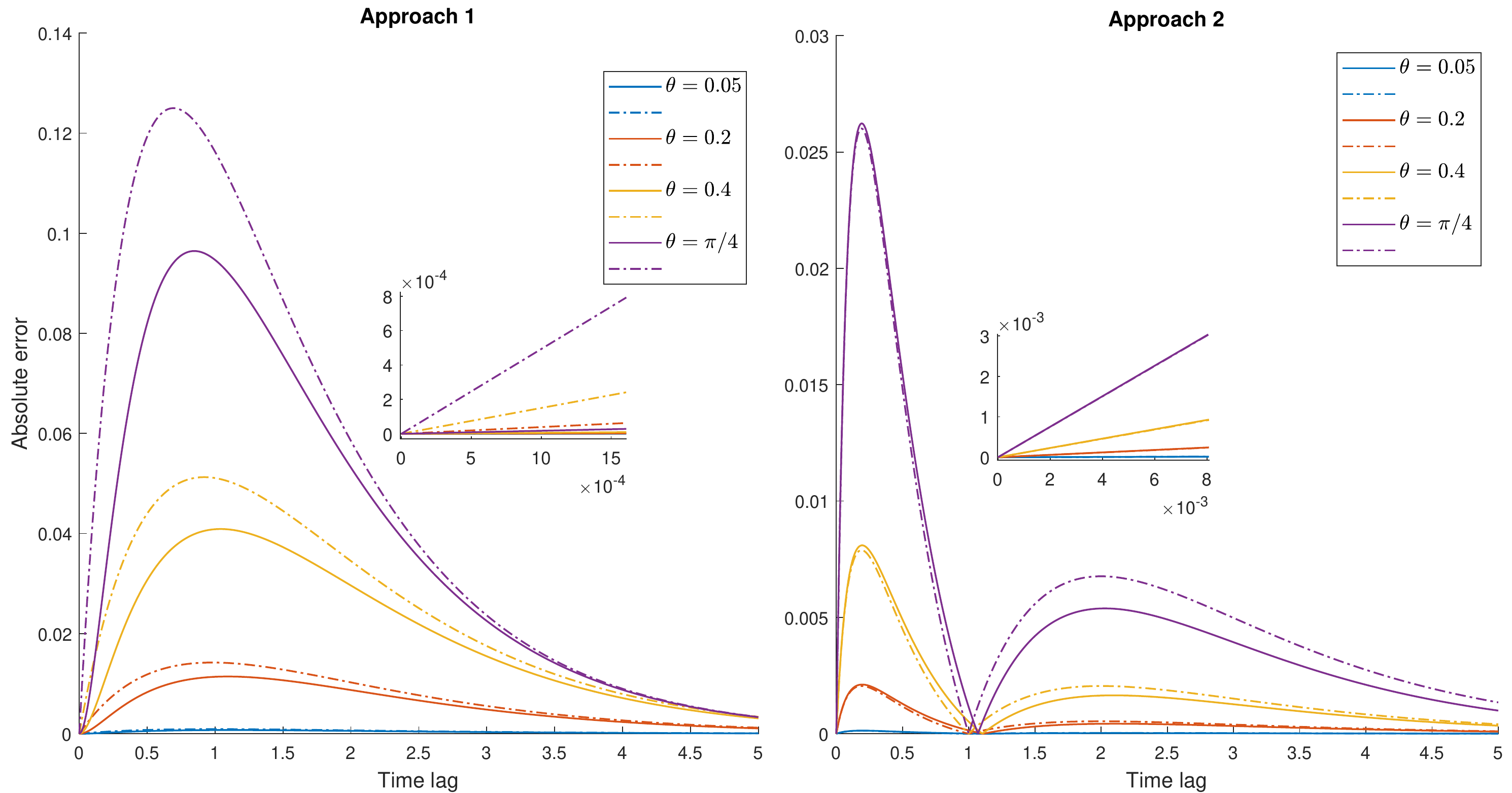}}
   { \includegraphics[width = 0.9\textwidth]{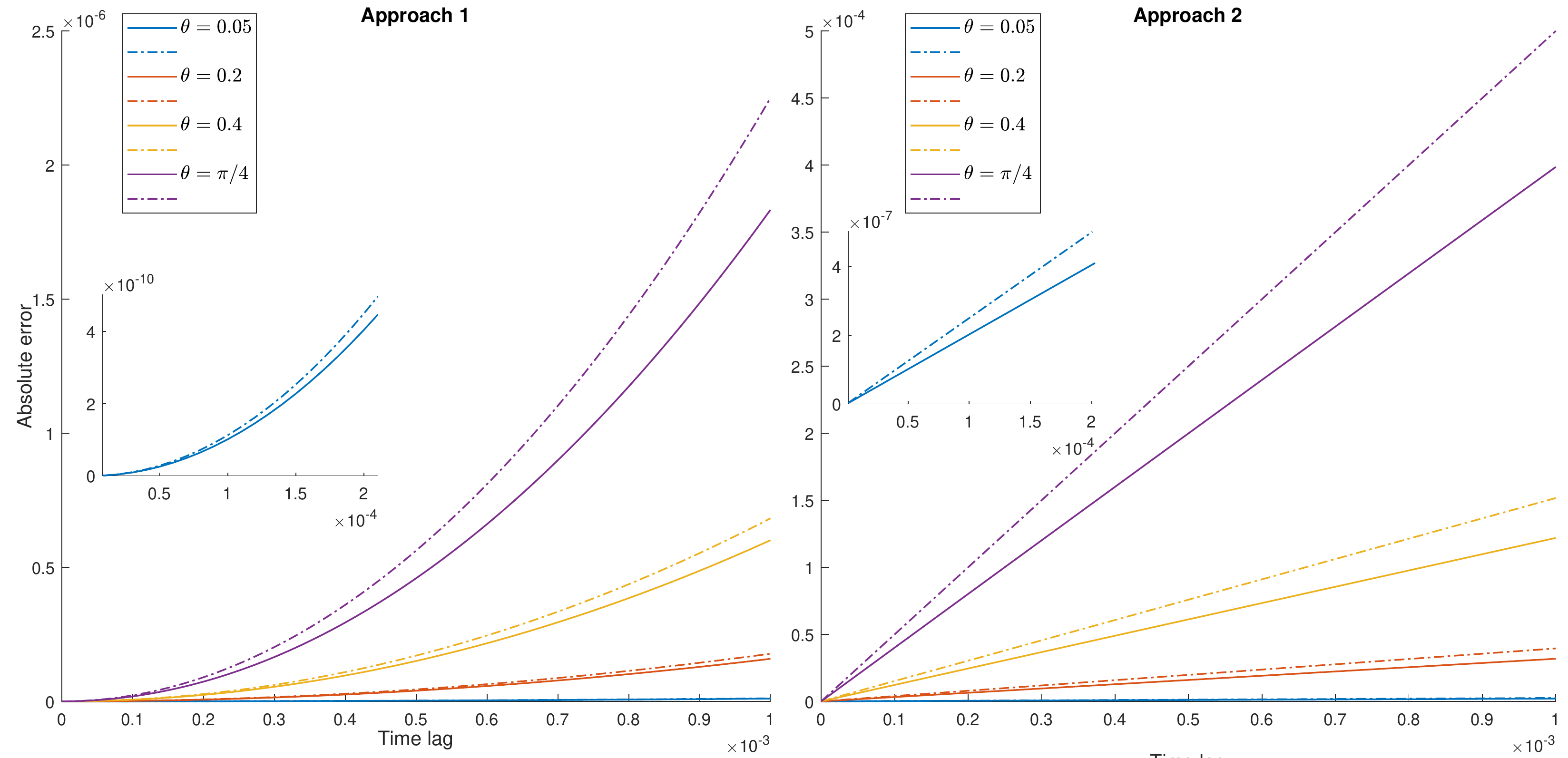}}
  
    \caption{The absolute autocovariance errors of  Approach~1 and Approach~2 plotted against varying time-lag $\tau$ for the two-dimensional system described in \eqref{2D_driftA} with $\lambda = 10$.  {\bf Top row:} comparison between the error over longer lag times $\tau\in [0,\,5]$ and estimates in Theorem~\ref{th:2Derror-longtime}.  {\bf Bottom row:} comparison between the error for short lag time $\tau\in[0,\, 0.001]$ and estimates in Theorem~\ref{th:2Derror-shorttime}.
    Solid lines denote true error and dash-dot lines denote the theoretical estimates of Approach~1 and Approach~2 the ACF error given in \eqref{App1_GlobalErr} and \eqref{App2_GlobalErr} for the {top row}; and in \eqref{ACF_App1_short} and \eqref{ACF_App2_short} for the bottom row. Different colours reflect different choices of $\theta$ in \eqref{2D_driftA}.
    }
    \label{fig:AbsErr_vs_tau}
\end{figure}
Both analytical results and simulations confirm that Approach~2 performs better than Approach~1 in the large time-scale separation regime.

\subsection {Multi-dimensional study}
\label{subsec:multiD_num}
Next, we consider a multi-dimensional system with $\bA$ to be a tridiagonal matrix with increasing diagonal entries $\lambda_1\le \lambda_2\le \dots\le \lambda_{10}$:
\begin{equation}\label{eq:10D_system}
\bA=\begin{pmatrix}
\lambda_1 & \sigma & 0 & \dots & 0\\
\sigma & \lambda_2 & \sigma &\ddots &\vdots\\
0 & \sigma & \lambda_3 & \ddots & 0 \\
\vdots & \ddots &\ddots &\ddots &\sigma \\
0 & \dots & 0 & \sigma & \lambda_N 
\end{pmatrix}.
\end{equation}
To parametrise a scale separation in the system that we can manipulate, we choose $\lambda_{1}=1$ and $N=10$, and $\lambda_2,\dots, \lambda_{10}$ are set to be equally spaced on $[1.5,\, 10]$. The parameter $\sigma>0$ allows us to perturb the eigenvalues and corresponding eigenvectors for the matrix away from $(\lambda_i,\be_i)$ for the case $\sigma=0$. We treat this perturbation as being a form of high-dimensional analogue of the rotation considered in Section~\ref{sec:2d_error} and Subsection~\ref{subsec:2D_num}, with the caveat that in general, the eigenvalues will also be perturbed in this case. In this setting, we set the dimension of the coarsest coarse-graining to be $d=1$ with the coarse-grained variable to be $\xi = q_1$, so that
$
\Phi= \begin{pmatrix}
1 & 0 &\cdots &0
\end{pmatrix}
$. This choice entails that as $\sigma$ increases, the coarse-graining projection deviates more from exactly selecting the eigenspace corresponding to the lowest eigenvalue, resulting in an increased difference between observations made from the full system and those taken from approximation Approach~1 and Approach~2 (solid lines in Figure~\ref{fig:Dim10_ACF_Rel}). We plot the ACF relative errors of $q_1$ of the full system with that of the finest coarse-grained system with $d=1$. Consistent with our observations from two-dimensional systems, the maximum relative error for Approach~2 is significantly smaller than that of Approach~1 in all cases we tests. The relative error for Approach~1 is also observed to grow faster than that for Approach~2 when the projection deviates from the optimal eigenspace.

\begin{figure}[htp!]   
\centering
        { \includegraphics[width = 0.45\textwidth]{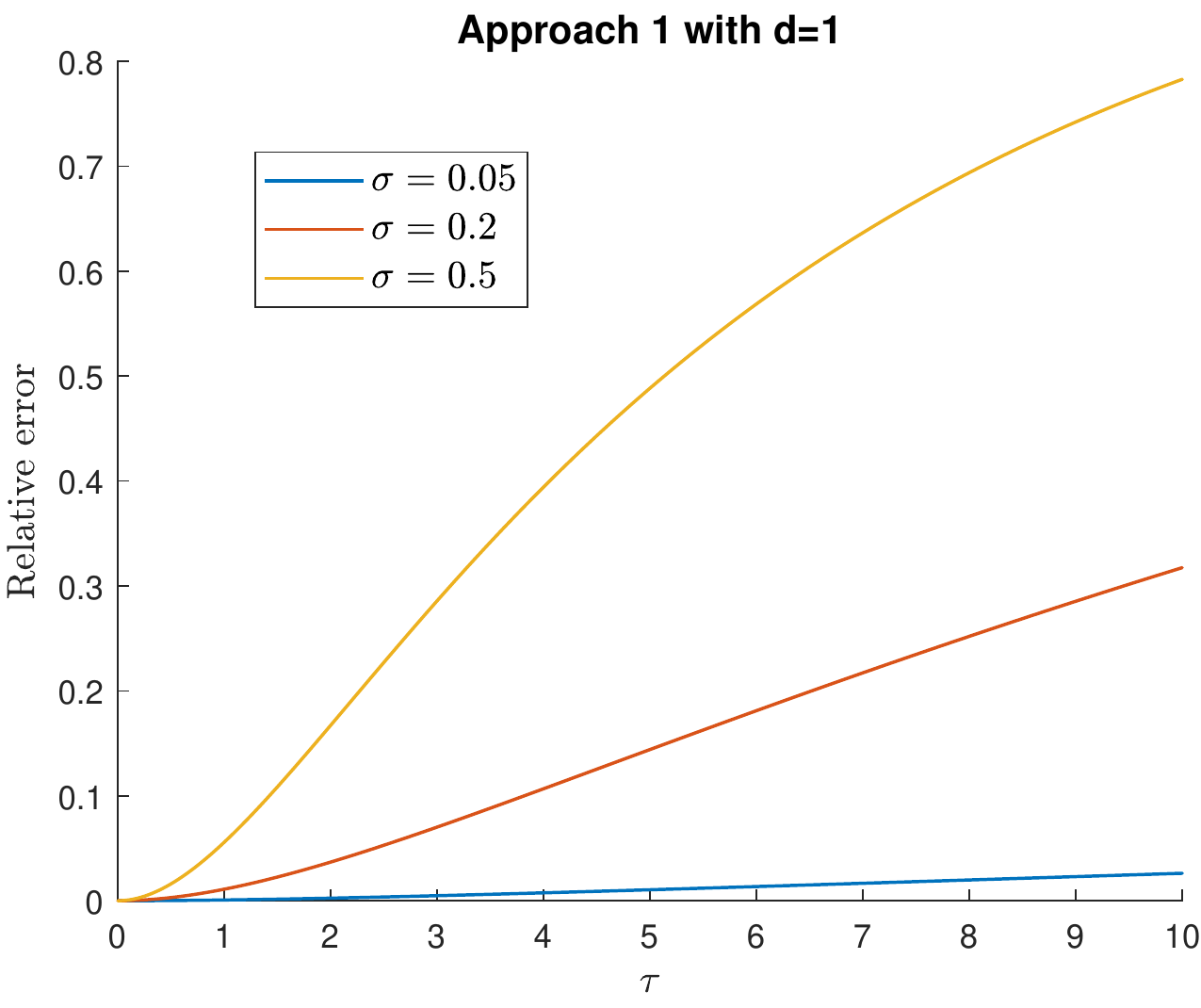}}
      \caption{Relative ACF errors of coarse-graining the first coordinate of the ten-dimensional system driven by $\bA$ given in \eqref{eq:10D_system}. 
   {\bf Left figure:} results of Approach~1. {\bf Right figure:} results of Approach~2.
   In the ten-dimensional setting, we have $\lambda_1 = 1$, $\lambda_2 = 1.5$, $\lambda_{10}=10$ and all other $\lambda_i$ are equally distributed in between $1.5$ and $10$. The increasing $\sigma$ results in larger deviations from the eigenspace.}
   \label{fig:Dim10_ACF_Rel}
 \end{figure}

Meanwhile, we investigate the effects of progressively coarse-graining by employing a wide range of coarse-graining levels with intermediate coarse dimension $n=2,\,4,\,6,\,8$. We fix $\sigma = 0.5$ and compare the ACF of $q_1$ among different levels of coarse-graining. The second column of Figure~\ref{fig:Dim10_ACF} displays the semi-log plots of the relative ACF errors to the full system and the third column of Figure~\ref{fig:Dim10_ACF} displays the difference between simulation of $d=1$ reduced system and simulations of intermediate $n$ system. The results of Approach~1 are consistent with Corollary~\ref{cor:coarsening} as the finest coarsening with $d=1$ provides the upper bound of all errors. For Approach~2, we confirm that the strict monotonicity in the error shown for Approach~1 does not hold, although we do observe that the difference of relative errors between differing levels of coarse-graining is very small.

\begin{figure}[htp!]
    \centering
        { \includegraphics[width = 0.45\textwidth]{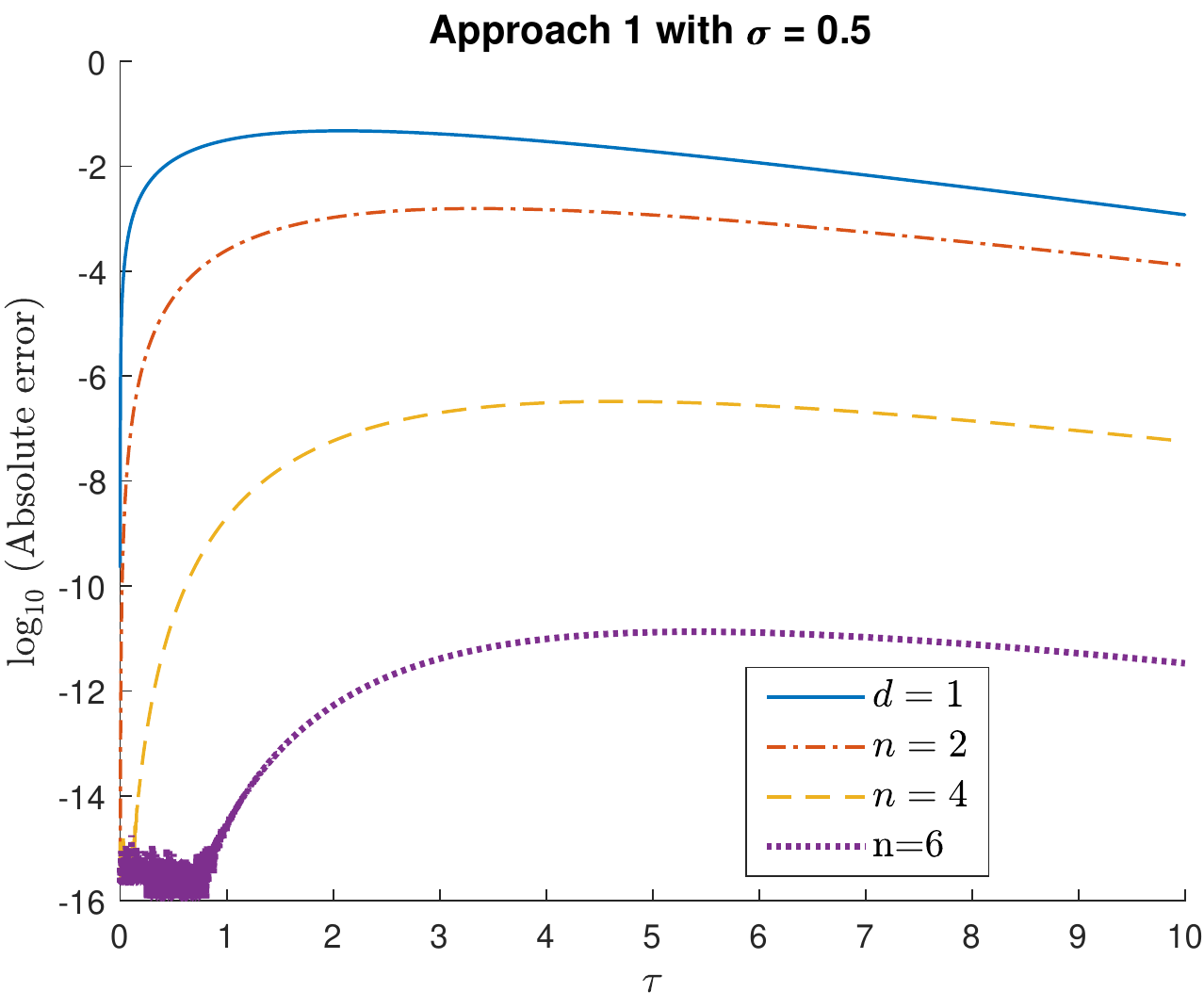}}
     { \includegraphics[width = 0.45\textwidth]{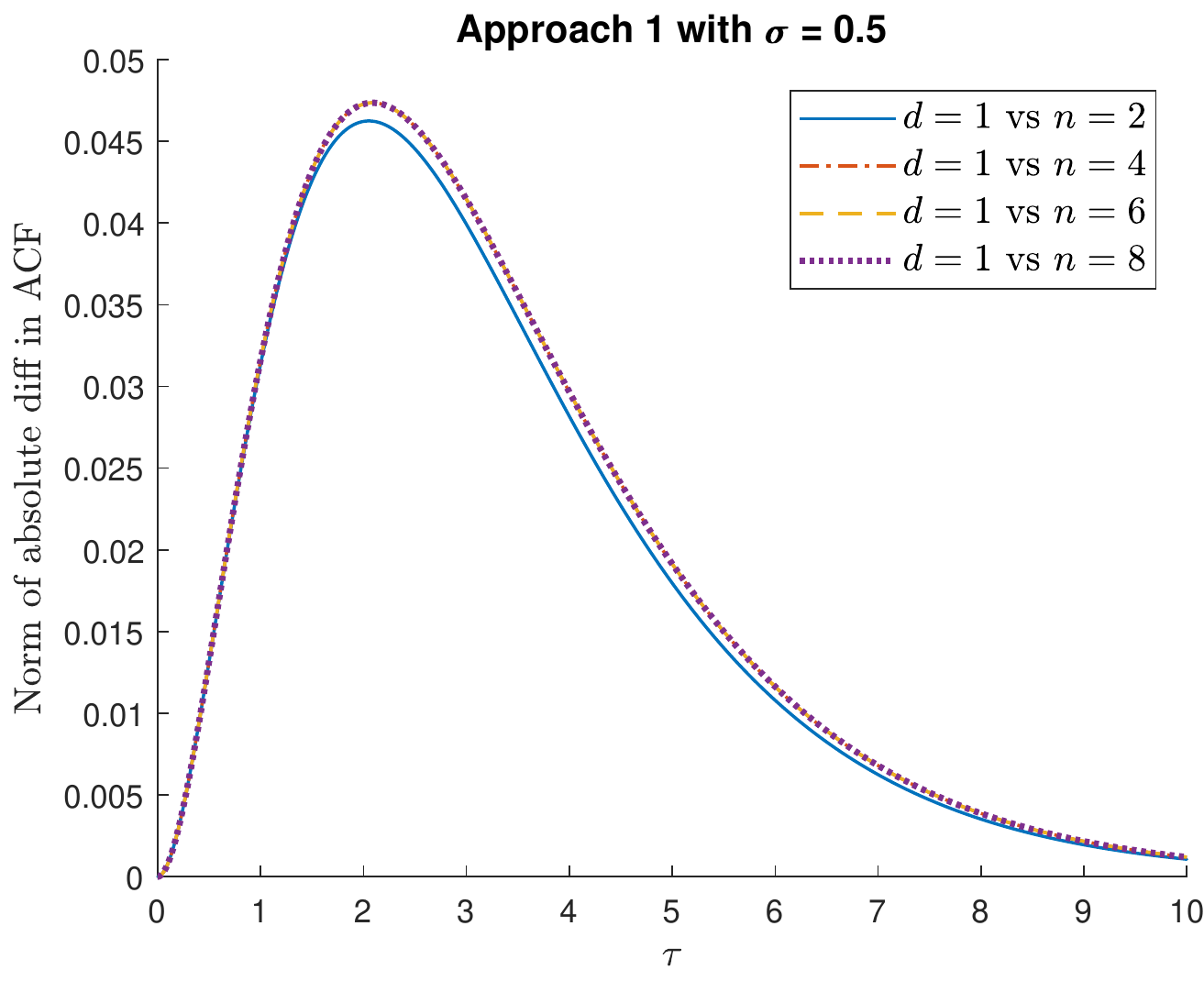}}
     { \includegraphics[width = 0.45\textwidth]{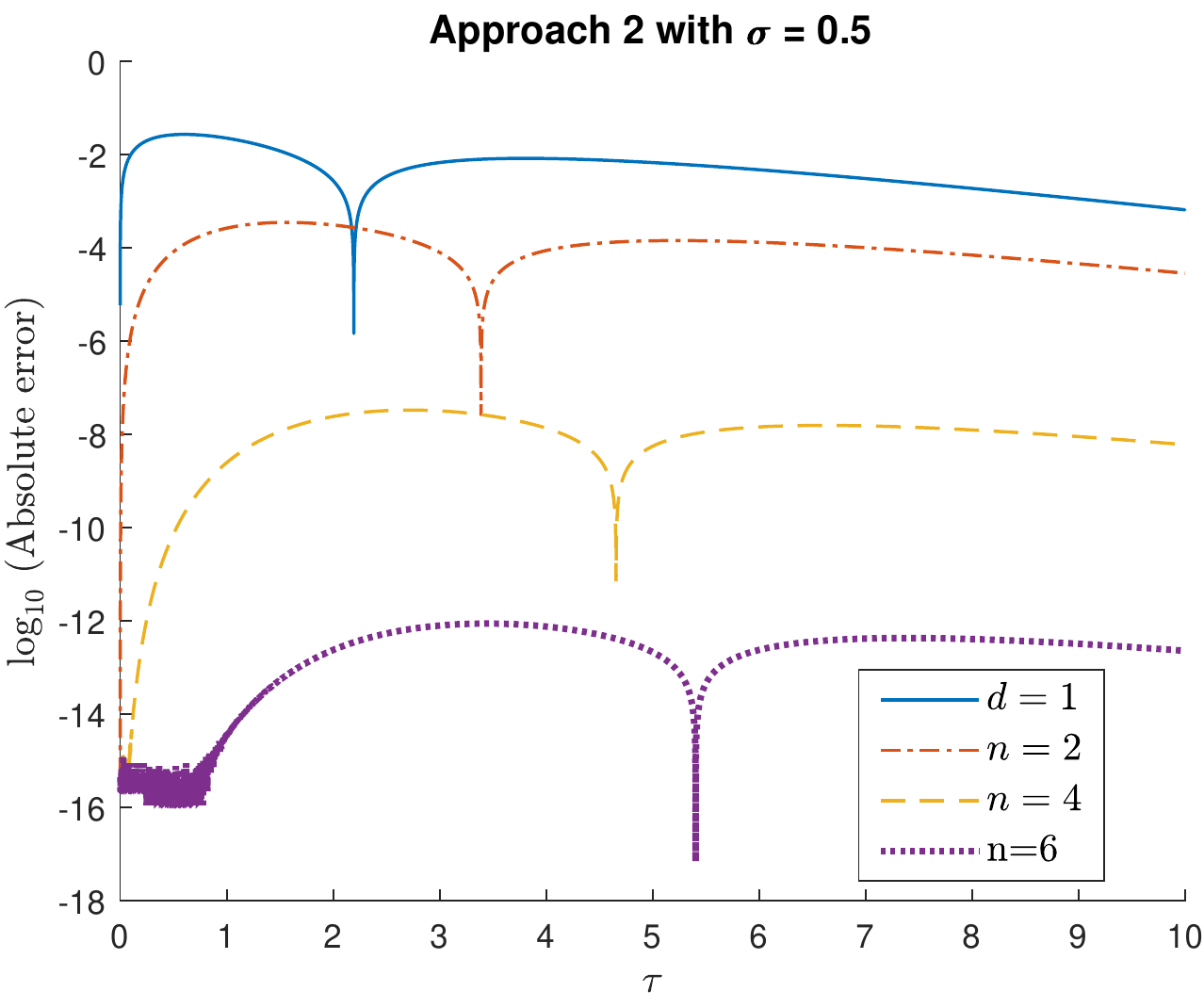}}
  { \includegraphics[width = 0.45\textwidth]{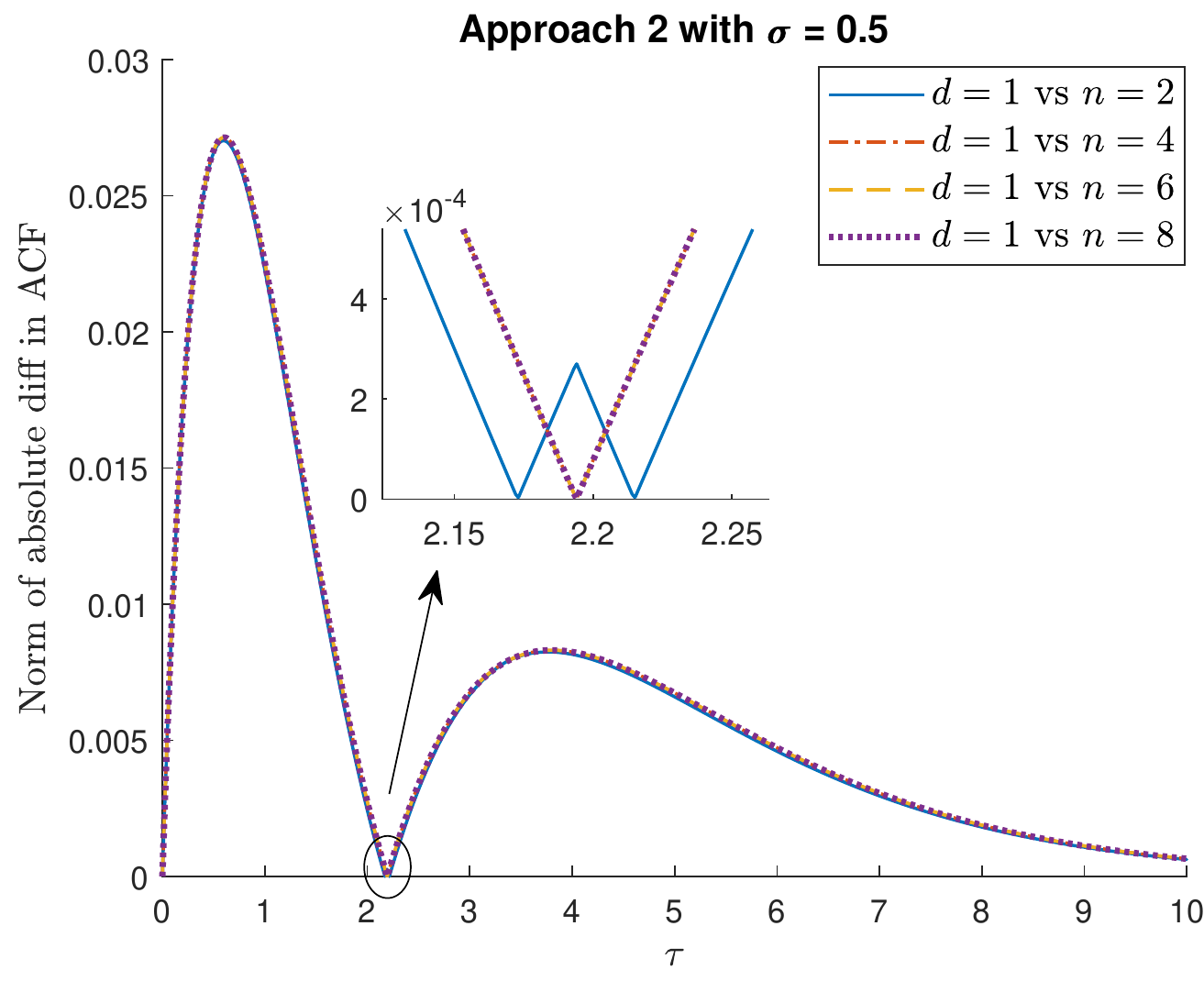}}
       \caption{ACF absolute errors of coarse-graining  the first coordinate 
       of the ten-dimensional system driven by $\bA$ given in \eqref{eq:10D_system}. 
    {\bf Top row:} results of Approach~1. {\bf Bottom row:} results of Approach~2.
    In the ten-dimensional setting, we have $\lambda_1 = 1$, $\lambda_2 = 1.5$, $\lambda_{10}=10$ and all other $\lambda_i$ are equally distributed in between $1.5$ and $10$. First column figures correspond to the $\log_{10}$-scale errors \eqref{Upper_bound_Coarest} in different partially coarse-grained simulations, where the results of $n=8$ is dropped as it is already machine prison. Second column figures represent the norm of absolute difference in ACF \eqref{Coarest_vs_intermediate} between the $d=1$ and intermediate (various $n$) coarse-grained dynamics.
    }
    \label{fig:Dim10_ACF}
 \end{figure}
    
\subsection{One-dimensional harmonic spring-mass system}\label{subsec:1Dchain_num} 
In our final set of numerical results, we study the performance of our different approaches as well as different level of coarsening for a more practical problem.
    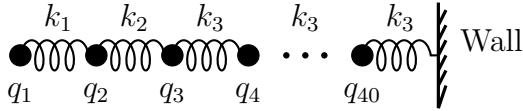
\begin{figure}[htp!]
    \centering
 \begin{tikzpicture}
 \draw (-4,0) circle(4pt) [fill];
 \node at (-4, -0.5) {$q_1$};
  \node at (-3.5, 0.5) {$k_1$};
  \draw[thick,decoration={aspect=0.5, segment length=2mm, amplitude=2mm,coil},decorate] (-4,0)--(-3,0);
 \draw (-3,0) circle(4pt) [fill];
  \node at (-3, -0.5) {$q_2$};
 \draw[thick,decoration={aspect=0.5, segment length=2mm, amplitude=2mm,coil},decorate] (-3,0)--(-2,0);
   \node at (-2.5, 0.5) {$k_2$};
  \draw (-2,0) circle(4pt) [fill];
    \node at (-2, -0.5) {$q_3$};
  \draw[thick,decoration={aspect=0.5, segment length=2mm, amplitude=2mm,coil},decorate] (-2,0)--(-1,0);
   \node at (-1.5, 0.5) {$k_3$};
  \draw (-1,0) circle(4pt) [fill];
    \node at (-1, -0.5) {$q_4$};
  \draw (-0.5,0) circle(1pt) [fill];
    \draw (-0.25,0) circle(1pt) [fill];
     \node at (-0.25, 0.5) {$k_3$};
      \draw (0,0) circle(1pt) [fill];
        \draw (0.5,0) circle(4pt) [fill];
          \node at (0.5, -0.5) {$q_{40}$};
      \draw[thick,decoration={aspect=0.5, segment length=2mm, amplitude=2mm,coil},decorate] (0.5,0)--(1.5,0);
        \node at (1, 0.5) {$k_3$};
      \draw[very thick,black] (1.5,-0.7)--(1.5,0.7);    
 \draw[very thick,black] (1.5,-0.7)--(1.6,-0.4);
 \draw[very thick,black] (1.5,-0.4)--(1.6,-0.2);
 \draw[very thick,black] (1.5,-0.2)--(1.6,0);
\draw[very thick,black] (1.5,0)--(1.6,0.2);
 \draw[very thick,black] (1.5,0.2)--(1.6,0.4);
 \draw[very thick,black] (1.5,0.4)--(1.6,0.6);
   \node at (2.2, 0.2) {Wall};
 \end{tikzpicture}
 \caption{Illustration of the 1D harmonic spring-mass system considered; $q_i$ is the displacement of each mass from its equilibrium position. The left boundary is left free, and the right boundary is assumed to be fixed. The relevant spring constant for each spring in the chain is shown.\label{Fig:spring-mass} } 
\end{figure}    

We consider a one-dimensional spring-mass system connected with different types of springs, as demonstrated in Figure~\ref{Fig:spring-mass}. 
 We set the total number of masses to be $N=40$ as a reference model, and consider the intermediate coarse-graining to be $n=4$ and the ultimate coarse-graining to be $d=2$, so that the ultimate coarse-grained
variables of interest are the first two masses $q_1$ and $q_2$.  Recalling Subsection~\ref{sec:multiple_reduction}, we explore the impacts of different levels of coarsening. We therefore consider the following coarse-graining scenarios:
\begin{enumerate}
    \item The first option is a direct reduction from $N=40$ to $d=2$. We then compare the statistics of $(q_1,\,q_2)$ as produced by the $40$D full dynamics with those by the surrogate $2$D dynamics by either Approach~1 or Approach~2. We use `40D vs 2D' to represent the results of this option.
     \item The second option is a reduction from $N=40$ to $n=4$. We firstly approximate the full dynamics by either Approach~1 or Approach~2 to reduce the number of masses from $N=40$ to $n=4$. Once we get the data for $(q_1,\,\dots, q_4)$ from this approximations, we then compare the statistics of $(q_1,\,q_2)$ by this $4$D surrogate dynamics with the statistics of $(q_1,\,q_2)$ by $40$D full dynamics. We use `40D vs 4D' to represent the results of this option.
    \item The third option considers the comparison of progressively coarse-graining. We firstly approximate the full dynamics by either Approach~1 or Approach~2 via option 2.
    Once we get the data for $(q_1,\,\dots, q_4)$ from the approximations, we then compare the statistics of $(q_1,\,q_2)$ by this $4$D surrogate dynamics with those by the surrogate $2$D dynamics in option 1. We use `4D vs 2D' to represent the results of this option.
\end{enumerate}
These scenarios allow us to directly compare the ACF errors for two variables of interest at different levels of fidelity. Also notice that when comparing the performance of Approach 1 and Approach 2, we need to focus on results of `40D vs 2D' and `40D vs 4D'.
    
  We fix $k_1=1$ throughout the simulations, and compare the ACF of the full dynamics, Approach~1 and Approach~2 under the different 
 coarse-graining scenarios where the values of $k_2$ and $k_3$ vary. A comparison of the {relative errors} versus the lag time $\tau\in(0,\,25]$ is plotted for the different parameter choice and approaches in Figure~\ref{Fig:ACF_chain_40}.
We summarise our findings below:
\begin{itemize}
    \item 
    In each setting considered, Approach~2 consistently produces smaller ACF errors than Approach~1.
    \item For Approach~1, the results of progressively coarse-graining agree the prediction in Corollary~\ref{cor:coarsening}. The ACF errors of `40D vs 2D' are always the biggest, which is the case where there is the greatest level of coarse-graining from $\bq$ directly to $\bx$.
    \item For Approach~2, we have seen above that there need not be a strictly monotonic decrease in error for a given time-lag as we increase the level of coarse-graining. However, in the second row of Figure~\ref{Fig:ACF_chain_40}, we do see overall that the ACF error for each 2D model tends to be greater than that for the 4D models. 
   \item We note that the value of $k_2$ appears to determine the approximations of $2$D reduced system and $k_3$ dominates the approximations of $4$D reduced system. This is because the spectral gap $(\lambda-1)$ and deviation angle $\theta$ both increase when $k_2$ becomes bigger for the  `2D' scenario and when $k_3$ becomes bigger for the '4D' scenario. We also notice that the deviations from optimal projections dominate the relative errors for both Approach~1 and Approach~2 due to the fact that the relative spectral gaps are close among different settings of $(k_2,\,k_3)$.
\end{itemize}
Overall, these results provide further evidence that Approach~2 provides a better approximation of dynamical properties of the coarse-grained system than Approach~1, particularly when there exists moderate to large scale separation in the full dynamics.

\begin{figure}[ht!]
    \centering
    \includegraphics[width = 0.95\textwidth, height = 7 cm]{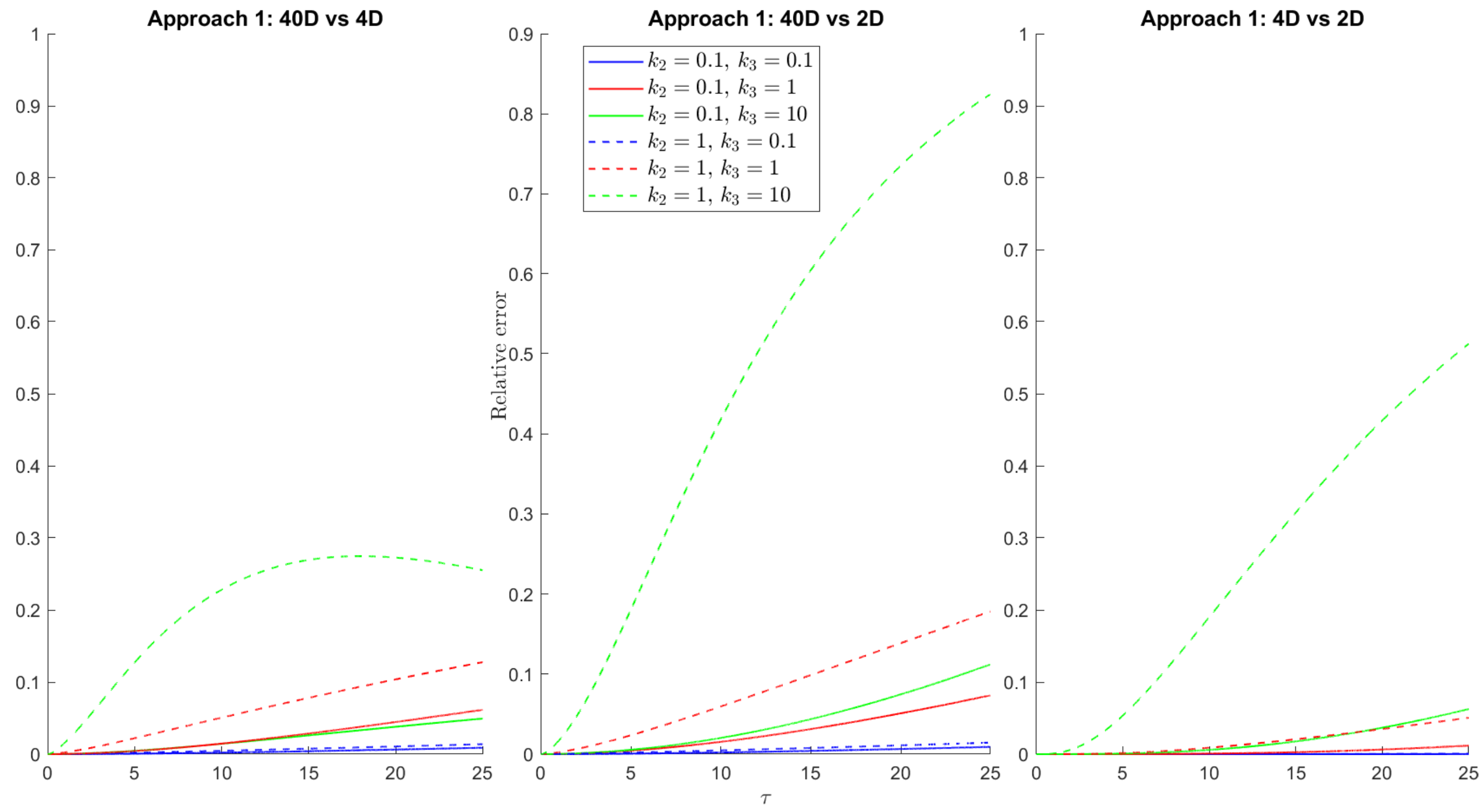}
     \includegraphics[width = 0.95\textwidth, height=7 cm]{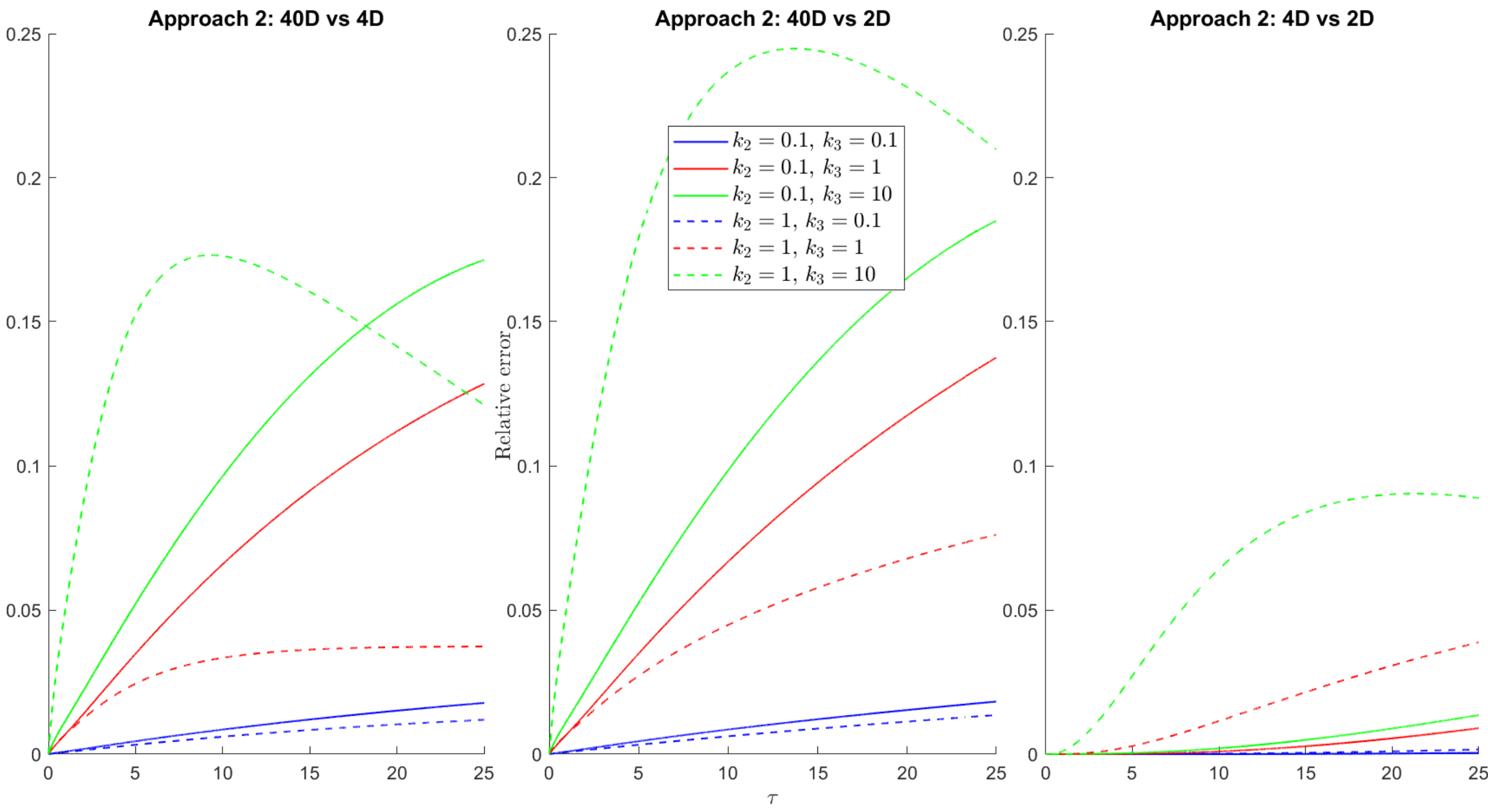}
    \caption{Plot of relative ACF errors for $(q_1,\, q_2)$ as predicted by the full dynamics and different levels of coarsening for Approach~1 and Approach~2. The three coarse-graining scenarios considered, Option 1: `40D vs 2D'; Option 2: `40D vs 4D' and Option 3: `4D vs 2D' are defined in Section~\ref{subsec:1Dchain_num}. Legends indicate the model parameters, which are consistent across each approach. 
    }
    \label{Fig:ACF_chain_40}
\end{figure}

\section{Conclusion}\label{sec:conclusion}
We have derived a framework to construct higher-order Markovian approximation of SDEs by expanding the integral representations of SDEs and hierarchically truncating them at various orders. We then analytically studied the long-time equilibrium properties and the short-time dynamical statistics of proposed approximations in two and higher dimensions. In particular, the dependence of the error in the dynamical autocovariance predicted by our reduced models is quantified in terms of both the timescale separation in the dynamics and the choice of coarse-grained variables, and the asymptotic dependence of the errors were explicitly specified in two-dimensional systems. For high-dimensional systems, we proved that autocovariance errors increase monotonically as the system is progressively coarse-grained using approximation Approach~1. While these features have been observed in numerical simulations in the past, our results provide the first theoretical confirmation that this must always happen.

Our analysis identified the conditions under which autocovariance errors are likely to increase more slowly with system timescale separation; the choice of coarse-grained variables; and the time-scale of simulation interest.

In future work, we aim to extend the current approximation strategy and analysis to other linear systems which act as prototypes for a range of more realistic dynamics for applications in statistical physics such as the underdamped Langevin system. We also plan to study nonlinear systems through a systematic expansion along the low-dimensional structure such as a slow manifold.

\paragraph{Acknowledgements.} X. Li is grateful for partial support by the NSF Award DMS-1847770 and internal Faculty Research Grants. T. Hudson is grateful for support during this collaboration received via a Leverhulme Trust Early Career Fellowship, ECF-2016-526. Both authors are grateful for the generous hospitality of the Institute for Pure and Applied Mathematics at UCLA during part of this work, and for the comments of the anonymous reviewers whose feedback helped to improve this manuscript significantly.

\paragraph{Rights retention statement.}
For the purpose of open access, the authors have applied a Creative Commons Attribution (CC BY) licence to any Author Accepted Manuscript version arising from this submission.

\appendix
\section{Proofs}
\label{app:proofs}

\subsection{Proof of Lemma~\ref{th:EquivalenceLemma}}\label{sec:EquivalenceProof}

    To prove Lemma~\ref{th:EquivalenceLemma}, we note that the equivalence of the first two statements is easy to verify using the definition $\bB = \bA_0-\balpha\bA_1^{-1}\balpha^*$ and the fact that $\bA_1$ is always a positive definite symmetric matrix under our standing assumptions on $\bA$, $\Phi$ and $\Psi$.
    
    To prove equivalence between the second and third statements, we first suppose that $\balpha=\bzer$. Let $\bU$ be the square matrix
    \[
    \bU = \left(\begin{array}{c}\Phi \\ \Psi\end{array}\right),
    \]
    so that
    \[
    \bU\bA\bU^* = \left(\begin{array}{cc}
        \bA_0 & \bzer\\
        \bzer & \bA_1
    \end{array}\right).
    \]
    From this block representation, we see that if $\bv=(v_1,\ldots,v_N)$ is an eigenvector of $\bU\bA\bU$, then so too are the vectors $\bv_0 = (v_1,\ldots,v_n,0\ldots,0)$ and $\bv_1=(0,\ldots,0,v_{n+1},\ldots,v_N)$. In particular, since $\bU\bA\bU^*$ is symmetric and strictly positive definite, there must be $n$ linearly independent eigenvectors of $\bU\bA\bU^*$ with $v_1,\ldots,v_n$ which are not all zero. Moreover, taking a corresponding $\bv_0$ vector, we have
    \[
        \lambda\bv_0 = \bU\bA\bU^*\bv_0
        \quad\Leftrightarrow\quad
        \lambda\bU^*\bv_0 = \bA\bU^*\bv_0
        \quad\Leftrightarrow\quad
        \lambda\Phi^*\left(\begin{array}{c}
             v_1  \\
             \vdots \\
             v_n
        \end{array}\right)
        =\bA\Phi^*\left(\begin{array}{c}
             v_1  \\
             \vdots \\
             v_n
        \end{array}\right).
    \]
    It follows that $\Phi^*\bv_0$ is an eigenvector of $\bA$, and hence there are $n$ distinct eigenvectors of $\bA$ spanned by the rows of $\Phi$.
    
    To prove the other implication, we note that by assumption, there exists a basis of
    $n$ vectors $\bv_0\in\R^n$ such that 
    \[
    \lambda\Phi^*\bv_0=
        \bA\Phi^*\bv_0.
        \]
    As $\Psi\Phi^*=\bzer$ due to the orthogonality of projections. we may apply $\Psi$ on the left to see that
    \[
      \bzer=\Psi\bA\Phi^*\bv_0 = \balpha^*\bv_0.
    \]
    Since $\balpha^*\in\R^{m\times n}$ vanishes on a basis of $\R^n$, it must be that $\balpha=\bzer$, completing the proof.

\subsection{Proof of Theorem~\ref{th:NDError}}\label{sec:NDErrorProof}
In this section, we provide a full proof of Theorem~\ref{th:NDError}. We approach each inequality in turn.\smallskip

    \noindent   
    \textit{Approach 1 lower bound.} We note that $f(x):=x\e^{-\tau/x}$ is a monotone convex function for any $\tau\geq0$, which can be verified directly by differentiating. Applying Theorem~\ref{th:Jensen}, we have that
    \[
    \bR_1(\tau) =  (\Phi\bA^{-1}\Phi^*)\e^{-\tau(\Phi\bA^{-1}\Phi^*)^{-1}}=f\big(\Phi\bA^{-1}\Phi^*\big)\leq \Phi f(\bA^{-1})\Phi^*= \Phi\Big(\bA^{-1}\e^{-\tau\bA}\Big)\Phi^*=\bR(\tau),
    \]
    implying the lower bound.\smallskip
    
    \noindent    
    \textit{Approach 1 upper bound.}
    Consider the function $g(x):=\tfrac12\tau^2/x-x\e^{-\tau/x}$, which is again a monotone convex function for any $\tau\geq0$. Applying Theorem~\ref{th:Jensen}, we have
    \begin{align*}
      \tfrac12t^2\bB-\bR_1(\tau) &= \tfrac12\tau^2(\Phi\bA^{-1}\Phi^*)^{-1}-(\Phi\bA^{-1}\Phi^*)^{-1}\e^{-\tau(\Phi\bA^{-1}\Phi^*)}\\
      &\leq \Phi\Big(\tfrac12\tau^2\bA-\bA^{-1}\e^{-\tau\bA}\Big)\Phi^*\\
      &=\tfrac12\tau^2\bA_0-\bR(\tau),    
    \end{align*}
    which, after rearranging, implies the desired upper bound.\smallskip
    
    \noindent
    \textit{Approach 2 lower bound.}
    To prove Approach 2 bounds, we begin by defining
    \[
    \bOm:=\bC^{\frac12}\Phi\in\R^{n\times N},
    \]
    Now, using the second part of the statement in Theorem~\ref{th:Jensen}, we obtain
    \begin{align*}
        \bR_2(\tau)+\tau\bC &= \bB^{-1}\e^{-t\bB\bC}+\tau\bC\\
        &= 
        \bC^{\frac12}\Big(f(\bOm\bA^{-1}\bOm^*)+\tau\bI\Big)\bC^\frac12\\
        &\leq \bOm \Big(f(\bA^{-1})+\tau\bI\Big)\bOm^*\\
        &= \bC^{\frac12} \Big(\bR(\tau)+\tau\bI\Big)\bC^{\frac12}\\
        &\leq \bR(\tau)+\tau\bI.
    \end{align*}
    Rearranging the final inequality yields the desired result.\smallskip
    
    \noindent
    \textit{Approach 2 upper bound.}
    In this case, we consider
    \begin{align*}
        \tfrac12\tau^2\bC\bB\bC-\tau\bC-\bR_2(\tau)&=
        \bC^{\frac12}\Big(g(\bOm\bA^{-1}\bOm^*)-\tau\bI\Big)\bC^{\frac12}\\
        &\leq\bOm g(\bA^{-1})\bOm^*-\tau\bC\\
        &=\bC^{\frac12}\Big(\tfrac12\tau^2\bA_0-\bR(\tau)-\tau\bI\Big)\bC^{\frac12}\\
        &\leq \tfrac12\tau^2\bA_0-\bR(\tau)-\tau\bI,
    \end{align*}
    and hence
    \[
    \bR(\tau)-\bR_2(\tau)\leq \tfrac12\tau^2(\bA_0-\bC\bB\bC)+\tau(\bC-\bI).
    \]
    Now, since $\bC\leq\bI$ and $\bA_0\geq\bB$, it follows that the operators on the right-hand side of this inequality are negative for small enough $\tau$, and hence there exists $\tau^*>0$ (in particular, taking the value of the first $\tau>0$ for which $0$ is an eigenvalue of the matrix on the right-hand side) such that
    \[
        \bR(\tau)-\bR_2(\tau)\leq 0\quad\text{for all }0\leq \tau\leq \tau^*.
    \]
    This completes the proof of the second upper bound, and hence of the Theorem.

\subsection{Proof of Corollary~\ref{cor:coarsening}}
\label{sec:coarseningproof}

Here, we give a proof of Corollary~\ref{cor:coarsening}. \smallskip

\noindent
\textit{Approach 1 estimates.}
We note that for any fixed $t\geq0$, $f(x):(0,+\infty)\to\R$ defined via $f(x):= x\e^{-t/x}$ is a monotone and convex function. 
    Applying Theorem~\ref{th:Jensen}, we have that
    \begin{align*}
    \bY\bA^{-1}\e^{-t\bA}\bY^*&=\bX\bigg( \bPhi \big(\bA^{-1}\e^{-t\bA}\big)\bPhi^*\bigg)\bX^*\\
    &\geq\bX (\Phi\bA^{-1}\Phi^*)\e^{-t(\Phi\bA^{-1}\Phi^*)^{-1}}\bX^*\\
    &=\bX \Big((\Phi\bA^{-1}\Phi^*)\e^{-t(\Phi\bA^{-1}\Phi^*)^{-1}}\Big)\bX^*\\
    &\geq  \big(\bX\Phi\bA^{-1}\Phi^*\bX^*\big)\e^{-t(\bX\Phi\bA^{-1}\Phi^*\bX^*)^{-1}}\\[2mm]
    &= (\bY\bA^{-1}\bY^*)\e^{-t(\bY\bA^{-1}\bY^*)^{-1}}\geq 0,
    \end{align*}
    which entails that 
    \[
       \bY \Cov(\bq_t,\,\bq_{t+\tau})\bY^*\geq \bX\Cov(\bxi_t,\,\bxi_{t+\tau})\bX^*\geq \Cov(\bx_t,\bx_{t+\tau})\geq 0.
    \]
    Next, we note that we can rearrange this inequality to deduce that
    \[
    \begin{split}
        0&\leq  \bY\Cov(\bq_t,\bq_{t+\tau})\bY^*-\bX\Cov(\bxi_t,\bxi_{t+\tau})\bX^* \leq \bY\Cov(\bq_t,\bq_{t+\tau})\bY^*-\Cov(\bx_t,\bx_{t+\tau}),
        \end{split}
    \]
    and 
    \[
        \bY\Cov(\bq_t,\bq_{t+\tau})\bY-\Cov(\bx_t,\bx_{t+\tau})\geq 
       \bX\Cov(\bxi_t,\bxi_{t+\tau})\bX^*-\Cov(\bx_t,\bx_{t+\tau})\geq 0.
    \]
    Taking the Frobenius norm, which preserves the L\"owner ordering on symmetric positive definite matrices, we obtain the stated estimates.\smallskip

\subsection{Proof of Theorem~\ref{th:2Derror-longtime}}
\label{app:proof-2Derror-longtime}

Here, we provide a proof of the results on the asymptotic error for systems with large time-scale separations in two dimensions.

Using the definitions of $\bA$, $\Phi$ and $\Psi$ in this setting and all relevant submatrices, we note that 
\begin{equation}\label{eq:2D_BandC}
B=\frac{\lambda}{\lambda\cos^2\theta+\sin^2\theta}\quad\text{and}\quad
C=\frac{(\lambda\cos^2\theta+\sin^2\theta)^2}{\lambda^2\cos^2\theta+\sin^2\theta}.
\end{equation} As such, we have that 
        \[
    B^{-1}=\cos^2\theta+\lambda^{-1}\sin^2\theta \quad\text{and}\quad B = \sec^2\theta+O(\lambda^{-1})
        \]
        as $\lambda\to+\infty$. This entails that
        \[
        \beta R_1(\tau) = B^{-1}\e^{-\tau B} = \cos^2\theta\,\e^{-\tau\sec^2\theta}+O(\e^{-\tau\lambda})+O(\lambda^{-1}).
        \]
        Similarly, we find that
        \[
        \beta R(\tau) = \e^{-\tau}\cos^2\theta+\lambda^{-1}\e^{-\tau\lambda}\sin^2\theta,
        \]
        and so for fixed $\tau>0$, the absolute error satisfies 
        \[
            |R(\tau)-R_1(\tau)| = \beta^{-1}\big(\e^{-\tau}-\e^{-\tau\sec^2\theta}\big)\cos^2\theta+O(\lambda^{-1})
        \]
        as $\lambda\to+\infty$. The relative error bound follows upon noting that for fixed $\tau$,
        \[
        \frac{1}{R(\tau)} = \beta \e^{\tau}\sec^2\theta+O(\lambda^{-1}),
        \]
        so multiplying the expansions together, we obtain the desired result.

        Turning to Approach 2, we note that
        \[
        BC = \frac{\lambda\sin^2\theta+\lambda^2\cos^2\theta}{\sin^2\theta+\lambda^2\cos^2\theta}=1+\frac{
    \tan^2\theta}{\lambda}-\frac{
    \tan^2\theta}{\lambda^2}+O(\lambda^{-3}).
        \]
        Substituting this expansion and Taylor expanding, we find
        \begin{align*}
            \beta R_2(\tau) = B^{-1}\e^{-\tau BC} &= \big(\cos^2\theta+\lambda^{-1}\sin^2\theta\big)\exp\Big(-\tau-\frac{\tau}{\lambda}\tan^2\theta+\frac{\tau}{\lambda^2}\tan^2\theta+O(\lambda^{-3})\Big)\\
            &=\big(\cos^2\theta+\lambda^{-1}\sin^2\theta\big)\e^{-\tau}\Big(1-\frac{\tau}{\lambda}\tan^2\theta+O(\lambda^{-2})\Big)\\
            &=\e^{-\tau}\cos^2\theta+\frac{1-\tau}{\lambda}\e^{-\tau}\sin^2\theta+
\tau \e^{-\tau}\sin^2\theta \frac{1 + (\tau-\frac12) \tan^2\theta}{\lambda^2}            
            +O(\lambda^{-3}).
        \end{align*}
        This expansion and those constructed for $R(\tau)$ and $R(\tau)^{-1}$ above immediately yield the result sought.
        The relative error bound at $\tau=1$ follows from the higher-order expansion of $R_2(\tau)$ made above.
        
\subsection{Proof of Theorem~\ref{th:2Derror-shorttime}}
\label{app:proof-2Derror-shorttime}

To prove this result, we use various expressions computed in the proof of Theorem~\ref{th:2Derror-longtime}, given in Section~\ref{app:proof-2Derror-longtime} above.

We note that if $\tau\ll \lambda^{-1}\ll 1$, then Taylor expanding about $\tau=0$ gives
\begin{align*}
\beta R(\tau) &= B^{-1}-\tau+
\tfrac12\big(1+(\lambda-1)\sin^2\theta\big)\tau^2+O(\lambda^2\tau^3)\\
\beta R_1(\tau) &= B^{-1}-\tau +\tfrac12\tau^2 B+O(\tau^3)\\
\beta R_2(\tau) &=B^{-1}-C\tau+O(\tau^2)
\end{align*}
This yields
\begin{align*}
|R(\tau)-R_1(\tau)| &= \tfrac12\tau^2\beta^{-1}\big(1+(\lambda-1)\sin^2\theta-B\big)+O(\lambda^2\tau^3)\\  
&=\tfrac12\tau^2\beta^{-1}(\lambda-1)\sin^2\theta+O(\tau^2)+O(\lambda^2\tau^3)
\end{align*}
and noting that $R(\tau)^{-1}=\beta\sec^2\theta+O(\tau)+O(\lambda\tau^2)$, we also have
\[
\frac{|R_1(\tau)-R(\tau)|}{R(\tau)} = \tfrac12\tau^2(\lambda-1)\tan^2\theta+O(\tau^2)+O(\lambda^3\tau^3).
\]
For Approach~2, we find
\begin{align*}
|R_2(\tau)- R(\tau)| &= \beta^{-1}(1-C)\tau+O(\lambda\tau^2)\\  
&=\beta^{-1}\tau\sin^2\theta + O(\tau/\lambda)+O(\lambda\tau^2),
\end{align*}
and so again using the asymptotic properties of $1/R(\tau)$ in this case, we have
\[
\frac{|R_2(\tau)-R(\tau)|}{R(\tau)} = \tau \tan^2\theta+ O(\tau/\lambda)+O(\lambda\tau^2).
\]
This completes the proof.

\bibliographystyle{unsrt}
\bibliography{references}

\end{document}